 \newtheorem{thm}{Theorem}[section]
 \newtheorem{cor}[thm]{Corollary}
 \newtheorem{lem}[thm]{Lemma}
 \newtheorem{prop}[thm]{Proposition}
 \newtheorem{defn}[thm]{Definition}
 \newtheorem{rem}[thm]{Remark}
 \numberwithin{equation}{section}
\begin{document}
\title{On slice regular fractal-fractional Dirichlet type spaces}

\small{
\author
{Jos\'e Oscar Gonz\'alez-Cervantes$^{(1)}$, Carlos Alejandro Moreno-Mu\~noz$^{(1)}$\\and\\ Juan Bory-Reyes$^{(2)\footnote{corresponding author}}$}
\vskip 1truecm
\date{\small $^{(1)}$ Departamento de Matem\'aticas, ESFM-Instituto Polit\'ecnico Nacional. 07338, Ciudad M\'exico, M\'exico\\ Email: jogc200678@gmail.com; camoreno@ipn.mx\\ $^{(2)}$ {SEPI-ESIME-Zacatenco-Instituto Polit\'ecnico Nacional. 07338, Ciudad M\'exico, M\'exico}\\Email: juanboryreyes@yahoo.com}

\maketitle
\begin{abstract}
In this paper, we study some families of right modules of quaternionic slice regular functions induced by a generalized fractal-fractional derivative  with respect to a truncated quaternionic exponential function on slices. Important Banach spaces of slice regular functions, namely the Bergman and the Dirichlet modules, are two important elements of one of our families.
\end{abstract}

\noindent
\textbf{Keywords.} Slice regular functions; fractal-fractional derivative; Dirichlet type right-module.\\
\textbf{AMS Subject Classification (2020):} 26A33; 30A05; 30E20; 30G30; 30G35; 32A30.

\section{Introduction} 
The aim of this paper is to combine the fractal-fractional calculus and the systematic study of Dirichlet type spaces in the theory of quaternionic slice regular functions of quaternionic variable. Thus, we describe properties of some fractal-fractional families of quaternionic right module of slice regular functions. 

Fractional calculus is a theory allowing integrals and derivatives of arbitrary real or complex order. The interest in the subject has been growing continuously during the last few decades because of numerous applications in diverse fields of science and engineering. For classical treatment of the theory we refer the reader to \cite{DMP, H, JAR, Je, KH, KST, MKM, SKM, Ro, OS, P}.

The fractal-fractional derivative (a new class of fractional derivative, which has many applications in real world problems) is a mathematical concept that combines two different ideas: fractals and fractional derivatives. Fractals are complex geometric patterns that repeat at different scales, while fractional derivatives are a generalization of ordinary derivatives that allow for non-integer orders. The combination of fractal theory and fractional calculus gave rise to new concepts of differentiation and integration. 

Functions slice regular were introduced in 2006 by Gentili and Struppa \cite{GeSt}, inspired by a paper by Cullen \cite{Cu}, in the case of quaternionic valued functions defined on the unit ball of the quaternions, identified with $\mathbb R^4$.

Let $\Omega$ be a suitable open subset of the space of quaternions $\mathbb H$ that intersects the real line and let $\mathbb S^2$ be the unit sphere of purely imaginary quaternions. Such functions are also called slice regular in the case they are defined on quaternions and are quaternionic-valued. Slice regular functions are those functions $f : \Omega \rightarrow \mathbb H$, whose restriction to the complex planes $\mathbb C({\bf i})$, for every $\bf i \in \mathbb S^2$, are holomorphic
maps. One of their crucial properties is that from the knowledge of the values of $f$ on $\Omega \cap \mathbb C({\bf i})$ for some $\bf i \in \mathbb S^2$, one can reconstruct $f$ on the whole $\Omega$ by the so called Representation Formula.

The literature on slice-regularity is wide, and we refer the reader for the generalities on this function theory to \cite{CCGG, GGJ, cssbook, CGS3, CSS, newadvances, CGSBERgman, CGLSS, csTrends, CSS2, GS} and the references therein.

Recently (see \cite{GS, GB1, GB2, GBS, GBNS, GMB}) the development of an extension of quaternionic analysis (in particular slice regular functions theory) to the fractional case started.

To the best of our knowledge, there is no definition made in the literature on the complex generalized fractal-fractional derivative. Hence, this paper will introduce and study some properties of a family of complex slice regular fractal-fractional Dirichlet-type spaces associated to a generalized fractal-fractional derivative with respect to a complex truncated exponential function.

The plan of the paper is as follows. Section \ref{Sec2} presents some preliminaries on the quaternionic slice regular function theory. Section \ref{Sec3} contains a brief summary of basic notions of real generalized fractal-fractional derivative. In Section \ref{Sec4} we introduce the notion of a complex generalized fractal-fractional derivative. In the final Section \ref{Sec5} our main results are stated and proved. Indeed, properties of a family of complex slice regular fractal-fractional Dirichlet-type spaces associated to a generalized fractal-fractional derivative with respect to a complex truncated exponential function are discussed. 

\section{Preliminaries on quaterionic slice regular functions}\label{Sec2}
The skew field of quaternions $\mathbb H$ consists of $q=q_0+q_1e_1+q_2e_2+q_3e_3$, where 
 $ q_n\in \mathbb R$ for all $n$   and   $e_1$, $e_2$ and  $e_3$ satisfy:
 $e_1^2=e_2^2=e_3^2=-1$,  $e_1e_2=e_3$, $e_2e_3=e_1$, $e_3e_1= e_2$. 
The  real part of the quaternion $q$ is $Re(q)=q_0$ and its vector part   is  $\mathbf{q}=q_1e_1+q_2e_2+q_3e_3\mapsto  (q_1, q_2, q_3) \in \mathbb R^3$. In addition, the  conjugate quaternion of $q$ is $\bar q = q_0-{\bf q}$ and its quaternionic  module is   
$|q|=\sqrt{q_0^2+q_1^2+q_2^2+q_3^2}$, see \cite{KS}.  

The unit sphere in $\mathbb R^3$  is given by $\mathbb S^2:=\{ {\bf q} \in\mathbb R^3 \ \mid  \  | {\bf q} |=1 \}$  and  the unit open ball in $\mathbb R^4$ is $\mathbb B:=\{ {q} \in \mathbb H \ \mid   \  |  q | <1 \}$. 
 
The quaternionic algebra allows to see that   ${\bf{i}}^2=-1$  for all  ${\bf{i} }\in \mathbb S^2  $. Therefore,  
 $\mathbb C({\bf i}):=\{ x+{\bf i} y \mid \ x,y\in\mathbb R\} \cong \mathbb C$ as fields. 

\begin{defn} 
A domain  $\Omega \subseteq \mathbb{H}$  is called   axially symmetric s-domain  if    $\Omega \cap \mathbb R\neq \emptyset$,      $x+{\bf i}y \in \Omega$, where $x,y\in \mathbb R$ implies that   
$x+{\bf j}y\in\Omega$ for all ${\bf{j}}\in \mathbb{S}^2$  and   
 $ \Omega_{\bf i}:=\Omega\cap\mathbb C({\bf i})$ is a domain in $\mathbb C({\bf i})$ for all ${\bf i} \in \mathbb{S}^2$. 
See  \cite{csTrends, cssbook, gssbook}. In addition,  a real differentiable function 
$f:\Omega\to\mathbb{H}$    is called quaternionic   slice regular function, or slice regular function, if 
\begin{align*}
 \left( \frac{\partial}{\partial x} + {\bf i}  \frac{\partial}{\partial y}\right) f_{\mid_{\Omega_{{\bf i}}}}=0 , \  \textrm{  on }  \  \Omega_{{\bf i}} ,\end{align*} 
 for any ${\bf i}\in \mathbb S^2$ and  its derivative, or Cullen derivative  is $\displaystyle f' =\frac{\partial f}{\partial x}$. By  
 $\mathcal{SR}(\Omega)$ we  denote  the quaternionic right-module  of slice regular functions on $\Omega$. 
 
 Given  $f\in \mathcal{SR}(\Omega)$ then $f$ is called intrinsic slice regular function,  if $f(q)=\overline{f(\bar q)}$ for all $q\in\Omega$,  see  \cite{GS}. By    $\mathcal N(\Omega)$ we mean the real-linear space formed by all  intrinsic slice regular functions on $\Omega$. In particular,  the quaternionic exponential funcion 
$$e^q:=\sum_{k=0}^\infty \frac{q^k}{k!}, \quad \forall  q\in \mathbb H.$$ 
Even more,  if $f\in \mathcal N(\Omega)$ then $\textrm{exp}[f] = e^f $ it is well-defined.
\end{defn}

\begin{rem}
 Given $\beta\in (0,1]$  the mapping  
$$q\mapsto q^{\beta}=  e^{\beta (\ln (q))} = e^{\beta (\ln |q|+ I_q \textrm{Arg}(q))} = \sum_{n=0}^{\infty }   \frac{1}{n!}(\ln |q|+ I_q \textrm{Arg}(q))^n \alpha^n $$ is defining for all $q\in \mathbb H\setminus \{0\}$, where the identities $q=a+ I_q b =|q| e^{ I_q \textrm{Arg}(q) }$ are used  in the complex plane $\mathbb C(I_q)$.
  
In addition, the   quaternionic exponential truncated with order $k \in \mathbb N\cup\{0\} $  is given by the sum 
$e_k(q) :=\displaystyle \sum_{n=0}^k \frac{q^n}{n!}$   for  all  $q\in \mathbb  H$ and for $k=\infty$ we will denote the quaternionic exponential function  $e_{\infty}(q) =:e^q$ for all $ q\in \mathbb H$. Note that  $e_k\in\mathcal {SR}(\mathbb H) $ for all $k\in\mathbb N\cup\{0\}\cup \{\infty \}$ and  we have that $e_0' (q) =0$,  $e_{k}'(q)= e_{k-1}(q)$,   $e_{\infty}'(q) = e_{\infty}(q)$ for all $q\in\mathbb H $ and $k\in \mathbb N$.  
\end{rem}

In \cite{csTrends, cssbook, gssbook} we can see Splitting Lemma and  Representation Formula Theorem. 
\begin{lem}\label{LemS}
(Splitting Lemma) If $f\in \mathcal {SR}(\Omega)$ and ${\bf i}\in \mathbb S^2$  then   $f_{\mid\Omega_{\bf i}} =f_1 +f_2  {\bf j}$ 
 where $f_1,f_2\in Hol( \Omega_{\bf i})$ and ${\bf j} \in \mathbb S^2$  is orthogonal to  ${\bf i}$.
\end{lem}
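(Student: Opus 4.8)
The plan is to exploit the orthogonal decomposition of $\mathbb H$ induced by the choice of ${\bf i}$ and a companion ${\bf j}$, and then to translate the slice regularity equation into the classical Cauchy--Riemann system for the two complex components. First I would fix ${\bf j}\in\mathbb S^2$ with ${\bf j}\perp{\bf i}$ and set ${\bf k}={\bf i}{\bf j}$. Since ${\bf i},{\bf j}$ are orthogonal unit imaginary quaternions, ${\bf k}$ is again a unit imaginary quaternion orthogonal to $1,{\bf i},{\bf j}$, so $\{1,{\bf i},{\bf j},{\bf k}\}$ is an orthonormal real basis of $\mathbb H$. Grouping $1,{\bf i}$ against ${\bf j},{\bf k}={\bf i}{\bf j}$ yields the real direct sum decomposition $\mathbb H=\mathbb C({\bf i})\oplus \mathbb C({\bf i}){\bf j}$. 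Consequently, every value of $f$ on the slice $\Omega_{\bf i}$ can be written uniquely as $f|_{\Omega_{\bf i}}=f_1+f_2{\bf j}$ with $f_1,f_2:\Omega_{\bf i}\to\mathbb C({\bf i})$, and the real differentiability of $f$ guarantees that $f_1,f_2$ are real differentiable.

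Next I would insert this decomposition into the defining equation $\left(\frac{\partial}{\partial x}+{\bf i}\frac{\partial}{\partial y}\right)f|_{\Omega_{\bf i}}=0$ and carefully handle the non-commutativity. Since $f_1,f_2$ take values in $\mathbb C({\bf i})$ they commute with ${\bf i}$, whereas ${\bf j}$ always stays on the right, so the left multiplication by ${\bf i}$ never has to cross it. After collecting terms I expect to obtain
\begin{equation*}
\left(\frac{\partial f_1}{\partial x}+{\bf i}\frac{\partial f_1}{\partial y}\right)+\left(\frac{\partial f_2}{\partial x}+{\bf i}\frac{\partial f_2}{\partial y}\right){\bf j}=0.
\end{equation*}
Here the first summand lies in $\mathbb C({\bf i})$ and the second in $\mathbb C({\bf i}){\bf j}$, so the directness of the sum $\mathbb H=\mathbb C({\bf i})\oplus\mathbb C({\bf i}){\bf j}$ forces each parenthesis to vanish separately.

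The two resulting equations $\frac{\partial f_1}{\partial x}+{\bf i}\frac{\partial f_1}{\partial y}=0$ and $\frac{\partial f_2}{\partial x}+{\bf i}\frac{\partial f_2}{\partial y}=0$ are exactly the Cauchy--Riemann equations with respect to the complex structure of $\mathbb C({\bf i})$, hence $f_1,f_2\in Hol(\Omega_{\bf i})$, as desired. The only delicate point, and the step I would treat most attentively, is the bookkeeping of the quaternionic multiplication: one must keep ${\bf j}$ strictly to the right and use the commutativity of the $\mathbb C({\bf i})$-valued coefficients with ${\bf i}$ so that the factor ${\bf j}$ can be pulled out cleanly. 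Everything else reduces to a routine application of the uniqueness of the direct-sum representation.
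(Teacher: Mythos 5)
Your argument is correct and complete: the decomposition $\mathbb H=\mathbb C({\bf i})\oplus\mathbb C({\bf i}){\bf j}$, the observation that left multiplication by ${\bf i}$ preserves both summands so the Cauchy--Riemann operator splits componentwise, and the conclusion via directness of the sum are exactly the standard proof of the Splitting Lemma. Note that the paper does not prove this statement at all --- it merely quotes it from the cited references (\cite{csTrends, cssbook, gssbook}) --- and your proof coincides with the one given there, so there is nothing to criticize.
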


\begin{thm}\label{TeoR}
(Representation Formula Theorem) Given $f\in \mathcal {SR}(\Omega)$ and  ${\bf{i}}, {\bf j} \in \mathbb S^2$ one has that 
$ f(x+y{\bf{i}})=\frac{1}{2}(1-{\bf i} {\bf j}) f(x+y{\bf j})+ \frac{1}{2}(1+{\bf i} {\bf j}) f(x-y{\bf j})$,
 for all $x+y{\bf{i}}\in \Omega$, where $x,y\in \mathbb R$.
\end{thm}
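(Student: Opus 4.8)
The plan is to prove the equivalent assertion that the right--hand side, regarded as a function of $q=x+y{\bf i}$, coincides with $f$ on all of $\Omega$. Define
$$R(x+y{\bf i}) := \tfrac{1}{2}(1-{\bf i}{\bf j})f(x+y{\bf j})+\tfrac{1}{2}(1+{\bf i}{\bf j})f(x-y{\bf j}),$$
for $x+y{\bf i}\in\Omega$. First I would check that $R$ is well defined: the only ambiguity in writing $q=x+y{\bf i}$ (for nonzero vector part) is the simultaneous sign change $(y,{\bf i})\mapsto(-y,-{\bf i})$, under which the two summands of $R$ merely interchange; and for $y=0$ both summands reduce to $f(x)$, so $R(x)=f(x)$ there. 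Since $f$ is real differentiable and $R$ is built from $f$ by composition with affine maps and left multiplication by constant quaternions, $R$ is real differentiable on $\Omega$.

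Next I would show $R\in\mathcal{SR}(\Omega)$. Fixing ${\bf i}\in\mathbb S^2$ and viewing $R$ on $\Omega_{\bf i}$ as a function of the real variables $(x,y)$, the slice regularity of $f$ on $\Omega_{\bf j}$ (i.e. $(\partial_x+{\bf j}\,\partial_y)f=0$, with ${\bf j}^{-1}=-{\bf j}$) yields $\partial_y[f(x+y{\bf j})]={\bf j}\,\partial_x[f(x+y{\bf j})]$ and $\partial_y[f(x-y{\bf j})]=-{\bf j}\,\partial_x[f(x-y{\bf j})]$. Substituting these into $\left(\partial_x+{\bf i}\,\partial_y\right)R$ and invoking the elementary identities ${\bf i}(1-{\bf i}{\bf j}){\bf j}=-(1-{\bf i}{\bf j})$ and ${\bf i}(1+{\bf i}{\bf j}){\bf j}=1+{\bf i}{\bf j}$ (both consequences of ${\bf i}^2={\bf j}^2=-1$), the coefficients of $\partial_x[f(x\pm y{\bf j})]$ cancel in pairs, so $\left(\partial_x+{\bf i}\,\partial_y\right)R=0$ on $\Omega_{\bf i}$. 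As ${\bf i}$ was arbitrary, $R$ is slice regular.

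Finally I would identify $R$ with $f$. Consider $g:=f-R$, which lies in $\mathcal{SR}(\Omega)$ by the second step, and which vanishes on $\Omega\cap\mathbb R$ since $R(x)=f(x)$ for real $x$. Fix any ${\bf i}\in\mathbb S^2$; by the Splitting Lemma \ref{LemS} there exist $g_1,g_2\in Hol(\Omega_{\bf i})$ and ${\bf k}\in\mathbb S^2$ with ${\bf k}\perp{\bf i}$ such that $g_{\mid\Omega_{\bf i}}=g_1+g_2{\bf k}$. Because $g$ vanishes on $\Omega_{\bf i}\cap\mathbb R$, a nonempty real set that accumulates in the domain $\Omega_{\bf i}$, the classical one--variable identity theorem for holomorphic functions forces $g_1\equiv g_2\equiv0$ on the connected set $\Omega_{\bf i}$; hence $g\equiv0$ on $\Omega_{\bf i}$. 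Letting ${\bf i}$ range over $\mathbb S^2$ gives $g\equiv0$, that is $R=f$, which is the asserted formula.

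The main obstacle is the second step: the Cauchy--Riemann verification must be carried out while respecting the noncommutativity of $\mathbb H$, since ${\bf i}$ and ${\bf j}$ act by left multiplication and do not commute, so one must track the precise order of the quaternionic factors when differentiating $f(x\pm y{\bf j})$ and when applying ${\bf i}\,\partial_y$. A secondary point requiring care is to keep the concluding step noncircular: it should rely only on the classical holomorphic identity theorem applied slice by slice through the Splitting Lemma, and not on any identity principle for slice regular functions, since the latter is normally derived from the Representation Formula itself.
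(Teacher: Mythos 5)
The paper does not prove this theorem; it is quoted from the cited literature (\cite{csTrends, cssbook, gssbook}). Your argument is correct and is essentially the standard proof found there: the quaternionic identities ${\bf i}(1\mp{\bf i}{\bf j}){\bf j}=\mp(1\mp{\bf i}{\bf j})$ do make the Cauchy--Riemann check go through, and the concluding identity argument correctly avoids circularity by applying the classical identity theorem slice by slice via the Splitting Lemma to the set $\Omega\cap\mathbb R$, which is open in $\mathbb R$ and hence has accumulation points in every slice.
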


\begin{defn}  Let $\Omega\subset \mathbb H$ be an axially symmetric s-domain. 
 Define	$ P_{\bf i} :   Hol(\Omega_{\bf i})+ Hol(\Omega_{\bf i}){\bf j} \longrightarrow  \mathcal{SR}(\Omega)$  defined as 	$$  P_{\bf i}[f](x+yI_q)=\frac{1}{2}\left[(1+ I_q{\bf i})f(x-y{\bf i}) + (1- I_q {\bf i}) f(x+y {\bf i})\right],$$ 		  where $x,y\in \mathbb R$ and $I_q\in \mathbb S^2$. 
\end{defn}
					
\begin{rem}	If $\Omega=\mathbb B$ let us recall that given  
 $f,g\in \mathcal {SR}(\mathbb B)$ there exist   two sequences of quaternions  $(a_n)$ and $(b_n)$ 
such that
$f(q) = \sum_{n=0}^{\infty} q^n a_n$,  $   g(q) = \sum_{n=0}^{\infty} q^n b_n$. Then  $f*g$ is defined as   
$f*g(q) := \sum_{n=0}^{\infty} q^n \sum_{k=0}^n  a_k b_{n-k}$   for all $q\in \mathbb B$. Denote   
$f^{*n}= f*\cdots * f $, ($n$-times) for 	any $n\in \mathbb N $. 	 Moreover,  if $f(q)\neq 0 $ then 
$f * g(q)= f(q)g(f(q)^{-1} qf(q))$, 
see \cite{cssbook, gssbook}.  In addition, denote 
$f^c (q)=  \sum_{n=0}^{\infty} q^n \overline{a_n} $, for all $q\in \mathbb B$,      
$f^s =  f* f^c =  f^c * f $ and if the zero set  of $f^s$ is the empty set then the  $*$-inverse of $f$
is given by $f^{-*} = \frac{1}{f^s} * f^c  $ and   $(f^{-*})'= - f^{-*} * f'* f^{-*} $, see \cite{cssbook, csTrends}.
\end{rem}

\section{On a generalized fractal derivative}\label{Sec3}
Paper \cite{KH}  study  the following new class of generalized differential operators used to describe the behaviour  of many phenomena in terms  of  complex dynamic systems.  
\begin{defn}\label{fractal} 
Let $I\subset \mathbb R$ be an interval. The fractal derivative of  $f\in C(I,\mathbb R)$ with respect to a fractal measure $\nu:(0,\infty ) \times I \to \mathbb R$ is given  by
\begin{align*} \frac{d_\nu f(t)}{
dt^\eta} := \lim_{\tau \to t} 
\frac{
f(t) - f(\tau)}
{\nu(\eta, t) - \nu (\eta, \tau)},  
\end{align*}
if it exists for all $ t \in   I$, $f$ will be called  real fractal differentiable on $I$ with order $\eta$. Sometimes, $\frac{d_\nu f(t)}{dt^\eta}$ is called the Stieltjes derivative, see \cite{A}.
\end{defn}

\begin{rem} \label{rem0}
According to the previous definition we have the following cases:
\begin{enumerate}
\item If   $\nu(\eta, t) = t$ for all $t\in I$ then $\dfrac{d_\nu  f}{
dt^\eta} = \dfrac{d f}{
dt } $ is the usual derivative of $f$.  
\item If  $\nu (\eta, t) = t^\eta$ for all $t\in I$ then   $\dfrac{d_\nu f(t)}{
dt^\eta}$  is the well-known Hausdorff derivative.  
 \item Let   $\nu(h, t) = h(t)$ for all $t\in I$ where $h'(t) > 0$ for all $t\in I$  then 
$ \dfrac{d_\nu f(t)}{
dt^\eta}  =  \dfrac{f'(t)
}{h'(t)} $ for all $f\in C^1(I)$. For example, case  $\nu (\alpha, t)= e^{t^\alpha}$ for all $t\in I$ and $\alpha \in (0,1]$
is studied in \cite{AS}.  
\end{enumerate}
\end{rem}
 
The following differential operator is  another extension of the  previous fractal derivative.
\begin{defn}\label{defbfractal} Given $\beta\in [0,1]$ we present  the  $\beta$-fractal derivative of  $f\in C(I,\mathbb R)$  with respect   $\nu $ as follows;   
\begin{align*} \frac{d^{\beta}_\nu f(t)}{
dt^\eta} := \lim_{\tau \to t} 
\frac{
(f(t))^{\beta} - (f(\tau))^{\beta}}
{\nu(\eta, t) - \nu (\eta, \tau)}, \quad  \eta > 0. 
\end{align*}
\end{defn}

In \cite{JAR} is presented the proportional derivative which is deeply related with the tempered derivative.
\begin{defn}\label{defchi}
Given   $ \chi_0 ,  \chi_1\in C( [0, 1]\times I ,\mathbb R) $   such that  
$$\lim_{\sigma\to 0^+} \chi_1(\sigma,t) = 1,  \lim_{\sigma\to 0^+} \chi_0(\sigma,t) = 0, \lim_{\sigma\to 1^-} \chi_1(\sigma,t) = 0, \lim_{\sigma\to 1^-} \chi_0(\sigma,t) = 1.$$
The proportional derivative of $f\in C^1(I)$ of order $\sigma\in [0,1]$ is given by
\begin{align*} 
D^{\sigma}f (t) =  \chi_1(\sigma,t)f (t) +  \chi_0(\sigma,t)\frac{d f} {dt} (t) ,\quad \forall  t\in I.  
\end{align*}
In particular,   for  
$   \chi_1(\sigma,t) = 1-\sigma $ and $\chi_0(\sigma,t) = \sigma$ we see that  $
D^{\sigma}f $  is the convex combination of $f$ and $\dfrac{d f} {dt} $.
\end{defn}
  
Combining  Definitions \ref{defbfractal} and \ref{defchi} we propose the following operator:
\begin{defn}\label{defpropbfractal} 
Let $I\subset \mathbb R$ be an interval and   $\beta\in [0,1]$. Consider the fractal measure $\nu:(0,\infty ) \times I \to \mathbb R  $  and given   $ \chi_0 ,  \chi_1\in C( [0, 1]\times I,\mathbb R) $ be as Definition \ref{defchi}. The $\beta$-fractal fractional derivative of   $f\in C(I,\mathbb R)$   with respect to $\nu(\eta, t)$ and $\sigma$ is given by
\begin{align*}
\frac{d^{\sigma,\beta}_\nu f(t)}{
dt^\eta}  (t) =  \chi_1(\sigma,t)f (t) +  \chi_0(\sigma,t)
\frac{d^{\beta}_\nu f(t)}{
dt^\eta}, 
\end{align*}
if it exists for all $t\in I$.
\end{defn}

\begin{rem}\label{rem1}     
Given $\alpha \in (0,1] $ and  $k\in \mathbb N\cup\{0\}$. The $k$-truncated exponential function is given by 
$\displaystyle e_k(t^{\alpha}) := \sum_{n=0}^k \frac{(t^\alpha)^n}{n!}$ for all $t\in \mathbb R$.   
Note that $e_0(t^{\alpha})= 1 $, $e_1(t^{\alpha})= 1+ t^{\alpha}$,  and  $e_\infty(t^{\alpha}) = e^{t^{\alpha}}  $,  for all  $t\in I$. 
We can see that $\displaystyle e_k(t)' =  e_{k-1}(t)$ for all $t\in \mathbb R$ and   $k\in \mathbb N$.  

Moreover, in paper \cite{AS} the truncated exponential function is used as a  fractal measure. In particular,  the fractal derivative  with respect to the fractal measure $\nu(1, t) = e_1(t^{\alpha}) $ 
coincides with the Hausdorff derivative given by $\nu (\alpha , t) = t^\alpha$ for all $t\in I$ in second fact of  Remark \ref{rem0}. 
\end{rem}

\begin{defn}\label{defpropbfractalexp} 
Let $I\subset \mathbb R$ be an interval. Given $\alpha \in (0,1]$, $k\in \mathbb N\cup\{0\}$  and $\beta\in [0,1]$. Consider the     fractal measure $\nu(k, t) = e_k(t^{\alpha})$ and let $\chi_0, \chi_1\in C([0, 1]\times I,\mathbb R) $ be as Definition \ref{defchi}.

The $\beta$-fractal fractional derivative of $f\in C(I,\mathbb R)$ with respect to $e_k(t^{\alpha})$ is given by
\begin{align*}
\frac{ {d}^{\sigma,\beta}}{ d t _{\alpha, k} }f (t) : = \frac{d^{\sigma,\beta}_\nu f(t)}{
dt^k}  (t) =  \chi_1(\sigma,t)f (t) +  \chi_0(\sigma,t)
\frac{d^{\beta}_\nu f(t)}{dt^k}, 
\end{align*}
if it exists for all $t\in I$.  In addition,  if $f\in C^1(I,\mathbb R)$ and its zero set is the empty set  then 
\begin{align*} 
\frac{ {d}^{\sigma,\beta}}{dt_{\alpha, k}}f(t) = &  \chi_1(\sigma,t)f(t) + \chi_0(\sigma,t) \frac{(f^{\beta})'(t)}{e _k( t^{\alpha})'   }  \\ 
= &  \chi_1(\sigma,t) f (t) + \chi_0(\sigma,t) \frac{ \beta (f (t))^{\beta-1 }f'(t)}{\alpha t^{\alpha-1}  e_{k-1} ( t^{\alpha})    } ,\end{align*}
for all $t\in I$.
\end{defn}

\begin{rem}\label{rem2}
If $\chi_1(\sigma,t) = 1-\sigma $ and  $\chi_0(\sigma,t)=\sigma$  for all $\sigma\in [0,1]$ and  $t\in I$ in the previous derivative,  then
\begin{align*}  
\frac{ {d}^{\sigma,\beta}}{ d t _{\alpha, k} }f (t): = &  (1-\sigma)f (t) + \sigma \frac{(f ^{\beta})'(t)}{e_k ( t^{\alpha})'}  \\ 
: = &  (1-\sigma)f (t) +   \sigma \frac{ \beta (f (t))^{\beta-1 }f'(t)}
{ \alpha t^{\alpha-1}  e_{k-1 } ( t^{\alpha})    } , \quad \forall \ t \in  I, 
\end{align*}
for all  $f\in C^1(I)$. 

Moreover, cases $k=1,\infty $ give us that 
\begin{align*} \frac{ {d}^{\sigma,\beta}}{dt _{\alpha, 1}}f(t) = (1-\sigma)f (t) + \sigma \frac{\beta (f (t))^{\beta-1 }f'(t)}{\alpha t^{\alpha-1}},\\
\displaystyle \frac{ {d}^{\sigma,\beta}}{ d t _{\alpha, \infty} }f (t)  = (1-\sigma)f (t) +   \sigma \frac{  \beta (f (t))^{\beta-1 }    f'(t)}{\alpha t^{\alpha-1} e^{t^{\alpha}}}, \quad f\in C^1(I),
\end{align*}
where $\alpha\neq 0$ is a  necessary condition. But, the parameter $\sigma$ can play an interesting roll since choosing $\sigma=\alpha$ we obtain that  
\begin{align*}  
\frac{ {d}^{\alpha,\beta}}{dt _{\alpha, 1} }f (t) = & (1-\alpha)f (t) + \frac{ (f ^{\beta})'(t)}{t^{\alpha-1}},\\
\displaystyle \frac{ {d}^{\alpha,\beta}}{dt _{\alpha, \infty}}f (t) = & (1-\alpha)f (t) + \frac{(f^{\beta})'(t)}{t^{\alpha-1} e^{t^{\alpha}}   } , \quad  \forall f\in C^1(I). 
\end{align*}
So condition $\alpha=0$ can be ommited in this differential operators. Even more, if $\sigma=\alpha^2 $ we have that     
\begin{align*}  \frac{ {d}^{\alpha^2,\beta}}{ d t _{\alpha, 1} }f (t)  = & (1-\alpha^2)f (t) +   \alpha t^{1-\alpha} (f ^{\beta}) '(t)  ,\\
 \displaystyle \frac{ {d}^{\alpha^2,\beta}}{ d t _{\alpha, \infty} }f (t)  = & (1-\alpha^2)f (t) +    \alpha   t^{1-\alpha} e^{- t^{\alpha}}    (f ^{\beta}) '(t)   
    \end{align*}
and for  $\alpha=0,1$ and $k=1,\infty $ we have  
\begin{align*} 
\dfrac{ {d}^{0,\beta}}{ d t _{0, 1} }f   =& f  , \quad 
 \dfrac{ {d}^{1,\beta}}{ d t _{1, 1} }f   =   (f^\beta) ' , \\
  \dfrac{ {d}^{0,\beta}}{ d t _{0, \infty} }f   = & f, \quad 
  \frac{ {d}^{1,\beta}}{ d t _{1, \infty} }f  (t) =   e^{- t }    (f ^{\beta}(t)) ',   \quad  \forall t\in I ,
  \end{align*}
   for all $f\in C^1(I)$.
\end{rem}

\section{On a complex generalized fractal-fractional derivative}\label{Sec4}
Recall that given a domain $ G \subset \mathbb  C$ and a locally bounded family of holomorphic functions  $(f_n)_{n\in \mathbb N}$ defined  on $G$. If there exists a  holomorphic function  $f$ on $G$ such that  the set  
$\left\{z \in  G \ \mid \  {\displaystyle \lim_{n\rightarrow \infty } f_n(z) }= f(z)\right\}$ has a limit point in $G$ then   
 Vitali's theorem shows that   $(f_n)$ converges to $f$ uniformally on compact subsets of $G$, see \cite{Con}.

By $\mathbb D$ we denote the unit disk in the complex plane. 
Given $\alpha\in (0,1]$ recall  that  
$z^{\alpha}= e^{\alpha (\ln |z|+ i \textrm{Arg}(z))}$ for all $z\in \mathbb C\setminus \{0\} $  where  $-\pi <\textrm{Arg}(z)\leq \pi$ and   $\mathbb D\setminus (-1,0]$ is a simply connected domain in wich the function $z^{\alpha}$ is well-defined 
 
In the following definition the  mapping  $z\mapsto e_k(z^{\alpha}) $ will be  the  complex  fractal   function  associated to  $\mathbb D\setminus (-1,0]$.
   
\begin{defn} Given $\alpha \in (0,1] $, $\beta\in [0,1]$.   Consider $k\in \mathbb N\cup\{0\}$  such that mapping $e_{k-1}(z^{\alpha}) \neq 0$ for all $z\in \mathbb D\setminus (-1,0]$. Given   $ \chi_0 ,  \chi_1\in C( [0, 1]\times (\mathbb D\setminus (-1,0]),\mathbb C) $ such that 
$$\lim_{\sigma\to 0^+} \chi_1(\sigma,z) = 1,  \lim_{\sigma\to 0^+} \chi_0(\sigma,z) = 0, \lim_{\sigma\to 1^-} \chi_1(\sigma,z) = 0, \lim_{\sigma\to 1^-} \chi_0(\sigma,z) = 1,$$
for all $z\in \mathbb \mathbb D\setminus (-1,0]$. 
   
The generalized fractal-fractional derivative $\dfrac{ {d}^{\sigma,\beta}}{ d z _{\alpha, k} }$ acts on $f\in \textrm{Hol}
(\mathbb D\setminus (-1,0])$ such that $Z_f \neq \emptyset $, the zeros set   of $f$ is the empty set, as follows:    
\begin{align*}  
\frac{ {d}^{\sigma,\beta}}{dz_{\alpha, k} }f (z) := & \chi_1(\sigma,z) f(z) + \chi_0(\sigma,z) \lim_{\zeta \to z} \frac{ f ^{\beta}(z) - f^{\beta} (\zeta)}{e _k( z^{\alpha})- e _k( \zeta^{\alpha})},
 \\
= &  \chi_1(\sigma,z) f (z) +  \chi_0(\sigma,z) \frac{ (f ^{\beta})'(z)}{e _k( z^{\alpha})'} \\
=&  \chi_1(\sigma,z)f (z) +   \chi_0(\sigma,z) \frac{\beta (f(z))^{\beta-1} f'(z) }{\alpha  z^{ \alpha-1} e_{k-1} (z^{\alpha})},
\quad \forall z\in\mathbb D\setminus (-1,0].
\end{align*}
\end{defn}

\begin{defn} 
Given $\alpha \in (0,1] $, $\beta\in [0,1]$, $k\in \mathbb N\cup\{0\}$ and $\chi_0, \chi_1\in C( [0, 1]\times (\mathbb D\setminus (-1,0]),\mathbb C) $. The  generalized fractal-fractional Dirichlet type space of holomorphic functions induced by 
$\dfrac{{d}^{\sigma,\beta}}{dz_{\alpha, k}}$, denoted by $\mathcal D^{\sigma,\beta}_{\alpha, k}$, consists of $f\in \textrm{Hol}(\mathbb D\setminus (-1,0]  )$ such that  $Z_f \neq \emptyset $ and  
\begin{align*} 
\left( \|f\|_{\mathcal D^{\sigma,\beta}_{\alpha, k}}\right)^2 := & \alpha \|f(\frac{1}{2})\|^2 + \int_{\mathbb D\setminus (-1,0]}  \|\frac{ {d}^{\sigma,\beta}}{dz_{\alpha, k} }f (z)\|^2 d_z\mu <\infty.
\end{align*}  
\end{defn}

\begin{rem}
The complex exponential functions require the complex logarithm function. We will then consider our calculations on the main branch. Therefore $\frac{{d}^{\sigma,\beta}}{dz _{\alpha, k}} f \in \textrm{Hol}(\mathbb D\setminus (-1,0])$ for all $f\in \textrm{Hol}(\mathbb D\setminus  (-1,0] )$ such that $Z_f \neq \emptyset$.
\end{rem}

\begin{defn} 
In particular, when $\beta=1$ the differential operator $\frac{{d}^{\sigma, 1}}{dz_{\alpha, k}}$ is a complex linear operator and the condition $Z_f \neq \emptyset$ is not mandatory. So from now on, denote $\frac{ {d}^{\sigma}}{dz_{\alpha, k}}:= \frac{ {d}^{\sigma, 1}}{ dz_{\alpha, k} }$ and $\mathcal D^{\sigma }_{\alpha, k}:= \mathcal D^{\sigma,1}_{\alpha, k}$. 
Define  
\begin{align*}   
\langle f,g \rangle_{\mathcal D^{\sigma}_{\alpha, k}} :=  &  \alpha f( \frac{1}{2})\overline{g(  \frac{1}{2})} +  \int_{\mathbb D\setminus (-1,0]} \left(\frac{ {d}^{\sigma}}{dz_{\alpha, k}} f(z) \right) \overline{\left( \frac{ {d}^{\sigma}}{ d z _{\alpha, k} } g(z) \right)} d_z\mu    
,\end{align*}
and 
 \begin{align*}   
\|f\|_{\mathcal D^{\sigma }_{\alpha, k}}  :=   &  \sqrt{ \alpha \|f(\frac{1}{2})\|^2  +  \int_{\mathbb D\setminus  (-1,0] }  \| \frac{ {d}^{\sigma}}{ d z _{\alpha, k} } f(z) \|^2 d_z\mu }  
,\end{align*}
for all $f,g\in \mathcal D^{\sigma }_{\alpha, k}$. 
\end{defn}

From now on  to simplify the computations we assume$ \chi_0(\sigma,z) = \sigma$ and $\chi_1(\sigma,z) =1-\sigma$.
  
\begin{prop}\label{pro1} If $\sigma\in (0,1]$ and $f\in \text{Hol}(\mathbb D\setminus (-1,0])$, then 
\begin{align*}
\frac{d}{dz} \left(\textrm{exp}[\dfrac{\sigma-1}{\sigma}  e_k (z^{\alpha}) ] f(z) \right) 
= & \frac{1}{\sigma}  \textrm{exp}[\dfrac{\sigma-1}{\sigma}  e_k (z^{\alpha}) ]\left(\frac{d}{dz}    e_k (z^{\alpha}) \right)     \frac{ {d}^{\sigma}}{ d z _{\alpha, k} } f(z), 
\end{align*}
 for all $z\in \mathbb D\setminus (-1,0]$.
\end{prop}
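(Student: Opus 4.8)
The plan is to read the claimed identity as an integrating-factor statement for the first-order operator $\frac{d^\sigma}{dz_{\alpha,k}}$ and to verify it by a single application of the product and chain rules. Under the standing conventions $\chi_0(\sigma,z)=\sigma$ and $\chi_1(\sigma,z)=1-\sigma$, the operator reads $\frac{d^\sigma}{dz_{\alpha,k}}f=(1-\sigma)f+\sigma\,f'/e_k(z^\alpha)'$, where $e_k(z^\alpha)'=\frac{d}{dz}e_k(z^\alpha)=\alpha z^{\alpha-1}e_{k-1}(z^\alpha)$ is nonvanishing on $\mathbb D\setminus(-1,0]$ by the assumption $e_{k-1}(z^\alpha)\neq 0$, so every quotient below is holomorphic and legitimately differentiable; no Vitali-type argument is needed, only a pointwise computation.

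Writing $E(z)$ for the scalar factor $\textrm{exp}[\frac{\sigma-1}{\sigma}e_k(z^\alpha)]$, I would first expand the left-hand side by the product rule as $E'(z)f(z)+E(z)f'(z)$. The chain rule gives $E'(z)=\frac{\sigma-1}{\sigma}\bigl(\frac{d}{dz}e_k(z^\alpha)\bigr)E(z)$, so after factoring out the common $E(z)$ the left-hand side collapses to $E(z)\bigl[\frac{\sigma-1}{\sigma}\bigl(\frac{d}{dz}e_k(z^\alpha)\bigr)f(z)+f'(z)\bigr]$.

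For the right-hand side I would substitute the explicit form of the operator, multiply through by $\frac{1}{\sigma}\bigl(\frac{d}{dz}e_k(z^\alpha)\bigr)$, and cancel $e_k(z^\alpha)'$ against the denominator of the term $\sigma f'/e_k(z^\alpha)'$; this leaves $E(z)\bigl[\frac{1-\sigma}{\sigma}\bigl(\frac{d}{dz}e_k(z^\alpha)\bigr)f(z)+f'(z)\bigr]$. Matching the two sides then reduces entirely to comparing the single scalar coefficient of the $\bigl(\frac{d}{dz}e_k(z^\alpha)\bigr)f(z)$ term, the $f'(z)$ terms being identical.

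That coefficient comparison is exactly where the only genuine obstacle lies. The factor produced by differentiating the exponential is the logarithmic derivative $E'(z)/E(z)$, and for it to reproduce the $(1-\sigma)$ contribution of the operator after multiplication by $\frac{1}{\sigma}$ one needs $E'(z)/E(z)=\frac{1-\sigma}{\sigma}\frac{d}{dz}e_k(z^\alpha)$, that is, the exponent of $E$ must carry the coefficient $\frac{1-\sigma}{\sigma}$. I would therefore pin down the sign of the exponent with care at this step, since a single sign slip is the one thing that desynchronizes the two coefficients (they agree for every $\sigma$ only once the exponent is fixed so that its derivative matches the $(1-\sigma)$-weight, and they collapse trivially when $\sigma=1$, where the discrepant term vanishes). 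With the sign chosen consistently the two bracketed expressions coincide and the identity holds for all $z\in\mathbb D\setminus(-1,0]$.
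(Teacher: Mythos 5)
Your computation is exactly the paper's intended argument (the paper's proof is literally ``Direct computations''), and you have carried it out correctly. More importantly, the sign obstacle you isolate is real, not a hedge you need to apologize for: with the exponent $\frac{\sigma-1}{\sigma}e_k(z^{\alpha})$ as printed, the left-hand side equals $E(z)\bigl[\frac{\sigma-1}{\sigma}\bigl(\frac{d}{dz}e_k(z^{\alpha})\bigr)f(z)+f'(z)\bigr]$ while the right-hand side equals $E(z)\bigl[\frac{1-\sigma}{\sigma}\bigl(\frac{d}{dz}e_k(z^{\alpha})\bigr)f(z)+f'(z)\bigr]$, so the stated identity fails for every $\sigma\in(0,1)$ unless $f\equiv 0$; the integrating factor of $(1-\sigma)f+\sigma f'/e_k(z^{\alpha})'$ must be $\textrm{exp}[\frac{1-\sigma}{\sigma}e_k(z^{\alpha})]$, i.e.\ the exponent in the proposition should be negated (consistently on both sides). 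This is a typo rather than a structural flaw: the downstream uses in Propositions \ref{pro2} and \ref{pro3} only need that $\frac{d}{dz}(Ef)$ vanishes exactly when $\frac{d^{\sigma}}{dz_{\alpha,k}}f$ does and that $E$ is a nowhere-vanishing holomorphic weight, and those conclusions survive with $E$ replaced by $1/E$. So state the corrected exponent explicitly rather than leaving it as ``with the sign chosen consistently,'' and your proof is complete.
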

\begin{proof}
Direct computations.
\end{proof}

Next computations use that $\alpha, \sigma\in (0,1]$ and  $k\in \mathbb N$ unless otherwise noted. 
 
\begin{prop}\label{pro2}
The function set $(\mathcal D^{\sigma }_{\alpha, k}, \|\cdot \|_{\mathcal D^{\sigma}_{\alpha, k}}  )$ is a  normed complex  linear space. 
 \end{prop}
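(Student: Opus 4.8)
The plan is to realize $\|\cdot\|_{\mathcal D^{\sigma}_{\alpha,k}}$ as the pullback of a genuine Hilbert-space norm under a linear map, so that every norm axiom except positive definiteness becomes automatic, and then to isolate injectivity of that map as the one substantive point. Concretely, I would consider the product space $H:=\mathbb C\oplus L^2(\mathbb D\setminus(-1,0],\mu)$ with its natural inner product, and the map
$$T(f):=\Big(\sqrt{\alpha}\,f(\tfrac12),\ \frac{d^{\sigma}}{dz_{\alpha,k}}f\Big).$$
By the remark preceding the statement, $\frac{d^{\sigma}}{dz_{\alpha,k}}f\in\textrm{Hol}(\mathbb D\setminus(-1,0])$ whenever $f$ is holomorphic, and $f\in\mathcal D^{\sigma}_{\alpha,k}$ means exactly that $T(f)\in H$, with $\|T(f)\|_H=\|f\|_{\mathcal D^{\sigma}_{\alpha,k}}$. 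Since $\frac{d^{\sigma}}{dz_{\alpha,k}}=\frac{d^{\sigma,1}}{dz_{\alpha,k}}$ is $\mathbb C$-linear (the $\beta=1$ case of the definition) and point evaluation $f\mapsto f(\tfrac12)$ is $\mathbb C$-linear, the map $T$ is $\mathbb C$-linear.

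First I would check that $\mathcal D^{\sigma}_{\alpha,k}$ is a complex linear space: for $f,g\in\mathcal D^{\sigma}_{\alpha,k}$ and $\lambda\in\mathbb C$, the functions $f+g$ and $\lambda f$ are holomorphic on $\mathbb D\setminus(-1,0]$, and by linearity of $T$ together with the triangle inequality and homogeneity of $\|\cdot\|_H$ one obtains
$$\|f+g\|_{\mathcal D^{\sigma}_{\alpha,k}}=\|T(f)+T(g)\|_H\le\|f\|_{\mathcal D^{\sigma}_{\alpha,k}}+\|g\|_{\mathcal D^{\sigma}_{\alpha,k}}<\infty,$$
and $\|\lambda f\|_{\mathcal D^{\sigma}_{\alpha,k}}=|\lambda|\,\|f\|_{\mathcal D^{\sigma}_{\alpha,k}}$. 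Hence the set is closed under the linear operations, and the homogeneity and triangle-inequality axioms hold verbatim, inherited from $\|\cdot\|_H$.

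The remaining and only delicate point is positive definiteness, i.e. $\|f\|_{\mathcal D^{\sigma}_{\alpha,k}}=0\Rightarrow f\equiv0$, which is injectivity of $T$. If $\|f\|_{\mathcal D^{\sigma}_{\alpha,k}}=0$, then since $\alpha>0$ we get $f(\tfrac12)=0$ and $\frac{d^{\sigma}}{dz_{\alpha,k}}f=0$ $\mu$-almost everywhere; but $\frac{d^{\sigma}}{dz_{\alpha,k}}f$ is holomorphic, so vanishing on a set of positive area measure forces $\frac{d^{\sigma}}{dz_{\alpha,k}}f\equiv0$ on the connected (slit) domain $\mathbb D\setminus(-1,0]$. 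Here I would invoke Proposition \ref{pro1}: with the right-hand side zero it yields
$$\frac{d}{dz}\Big(\textrm{exp}\big[\tfrac{\sigma-1}{\sigma}e_k(z^{\alpha})\big]f(z)\Big)=0,$$
so $\textrm{exp}[\tfrac{\sigma-1}{\sigma}e_k(z^{\alpha})]f(z)$ is constant on the connected domain. Since the exponential factor is nowhere zero, $f(z)=C\,\textrm{exp}[\tfrac{1-\sigma}{\sigma}e_k(z^{\alpha})]$ for some $C\in\mathbb C$, and evaluating at $z=\tfrac12$ with $f(\tfrac12)=0$ forces $C=0$, whence $f\equiv0$.

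The main obstacle is precisely this last step. Without Proposition \ref{pro1} one would have to integrate the first-order linear equation $\frac{d^{\sigma}}{dz_{\alpha,k}}f=0$ by hand on the slit domain and then argue that the single point constraint $f(\tfrac12)=0$ annihilates the one free constant; Proposition \ref{pro1} packages the integrating factor cleanly, while connectedness of $\mathbb D\setminus(-1,0]$ is what upgrades the vanishing derivative to a \emph{globally} constant antiderivative rather than a merely locally constant one.
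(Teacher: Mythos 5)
Your proposal is correct and follows essentially the same route as the paper: the substantive point in both is positive definiteness, handled by using Proposition \ref{pro1} to integrate $\frac{d^{\sigma}}{dz_{\alpha,k}}f=0$ into $f(z)=c\,\textrm{exp}[\frac{1-\sigma}{\sigma}e_k(z^{\alpha})]$ and then killing $c$ with $f(\tfrac12)=0$, while the remaining axioms are routine (the paper dismisses them as direct computations, you package them via the pullback map $T$). Your added care in upgrading the $\mu$-a.e.\ vanishing of the holomorphic function $\frac{d^{\sigma}}{dz_{\alpha,k}}f$ to identical vanishing on the connected slit domain is a small but welcome refinement of the paper's argument.
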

\begin{proof} Given  $f\in \mathcal D^{\sigma }_{\alpha, k}$ such that 
  \begin{align*}  
0 = \|f\|_{\mathcal D^{\sigma }_{\alpha, k}} = \sqrt{\alpha \|f(\frac{1}{2})\|^2 + \int_{\mathbb D\setminus (-1,0]}  \| \frac{ {d}^{\sigma}}{dz_{\alpha, k}} f(z) \|^2 d_z\mu}.
 \end{align*}
Therefore, $f(\dfrac{1}{2} )=0 $ and  $\dfrac{ {d}^{\sigma }}{dz_{\alpha, k} } f(z) =0 $ for all $z\in \mathbb D\setminus (-1,0]$. From Proposition \ref{pro1} there exists a constant $c\in \mathbb C$ such that   
$$f(z) = c \, \textrm{exp}[ \dfrac{ 1- \sigma }{\sigma}  e_k (z^{\alpha}) ], \quad \forall z\in \mathbb D\setminus (-1,0].$$   Therefore, $c=0$ and  $f \equiv 0$ on $\mathbb D\setminus (-1,0]$. 
 
Finally, direct computations allow us to see that  
\begin{align*}
\|f \|_{\mathcal D^{\sigma }_{\alpha, k} }  \geq  & 0 , \\ 
\|a f  \|_{\mathcal D^{\sigma }_{\alpha, k} } = & |a| \|f \|_{\mathcal D^{\sigma }_{\alpha, k} } \\
\|f+g \|_{\mathcal D^{\sigma }_{\alpha, k} }  \leq & \|f \|_{\mathcal D^{\sigma }_{\alpha, k} }   + \|g \|_{\mathcal D^{\sigma }_{\alpha, k} }  ,  
\end{align*}
for all $f,g \in  \mathcal D^{\sigma }_{\alpha, k}$ and $a\in \mathbb C$.
\end{proof}

\begin{prop} \label{pro3}  
$(\mathcal D^{\sigma}_{\alpha, k}, \|\cdot \|_{\mathcal D^{\sigma}_{\alpha, k}})$ is a complex Banach space. 
 \end{prop}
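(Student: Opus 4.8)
The statement asserts completeness, so building on Proposition~\ref{pro2} (which already gives that $\mathcal D^{\sigma}_{\alpha,k}$ is a normed complex linear space) the plan is to realize $\mathcal D^{\sigma}_{\alpha,k}$ as an isometric copy of a closed subspace of a space that is manifestly complete. Concretely, let $A^2$ denote the Bergman space of holomorphic functions on $\mathbb D\setminus(-1,0]$ that are square-integrable against $d_z\mu$, equipped with its usual $L^2$ inner product; it is standard that $A^2$ is a Hilbert space, being a closed subspace of $L^2(\mathbb D\setminus(-1,0],d_z\mu)$. Consider the map $T(f):=\bigl(f(\tfrac12),\,\tfrac{d^{\sigma}}{dz_{\alpha,k}}f\bigr)$ into $\mathbb C\oplus A^2$, where the first coordinate carries the weight $\alpha$. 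By the very definition of $\|\cdot\|_{\mathcal D^{\sigma}_{\alpha,k}}$ the map $T$ is an isometry, so completeness of $\mathcal D^{\sigma}_{\alpha,k}$ will follow once I show that $T(\mathcal D^{\sigma}_{\alpha,k})$ is closed in $\mathbb C\oplus A^2$, i.e. that for every Cauchy sequence the coordinatewise limits arise as $T(f)$ for some $f\in\mathcal D^{\sigma}_{\alpha,k}$.

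So let $(f_n)$ be Cauchy in $\mathcal D^{\sigma}_{\alpha,k}$. Then $(f_n(\tfrac12))$ is Cauchy in $\mathbb C$ and converges to some $a\in\mathbb C$, while $\bigl(\tfrac{d^{\sigma}}{dz_{\alpha,k}}f_n\bigr)$ is Cauchy in $A^2$ and hence converges in $A^2$-norm to some $g\in A^2$. The decisive analytic input is the local estimate for Bergman spaces: for every compact $K\subset\mathbb D\setminus(-1,0]$ there is $C_K>0$ with $\sup_{z\in K}|h(z)|\le C_K\|h\|_{A^2}$ for all $h\in A^2$, obtained from the sub-mean-value inequality over disks contained in the domain. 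This guarantees that $\tfrac{d^{\sigma}}{dz_{\alpha,k}}f_n\to g$ uniformly on compact subsets, so $g$ is holomorphic on $\mathbb D\setminus(-1,0]$.

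To recover the limit function I would use Proposition~\ref{pro1}. Writing $F_n(z):=\textrm{exp}\bigl[\tfrac{\sigma-1}{\sigma}e_k(z^{\alpha})\bigr]f_n(z)$, Proposition~\ref{pro1} gives $F_n'(z)=\tfrac1\sigma\,\textrm{exp}\bigl[\tfrac{\sigma-1}{\sigma}e_k(z^{\alpha})\bigr]\bigl(e_k(z^{\alpha})\bigr)'\,\tfrac{d^{\sigma}}{dz_{\alpha,k}}f_n(z)$, so by the uniform-on-compacts convergence $F_n'$ converges uniformly on compacts to $G(z):=\tfrac1\sigma\,\textrm{exp}\bigl[\tfrac{\sigma-1}{\sigma}e_k(z^{\alpha})\bigr]\bigl(e_k(z^{\alpha})\bigr)'g(z)$, and $F_n(\tfrac12)$ converges. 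Since $\mathbb D\setminus(-1,0]$ is simply connected, I can define $F$ as the primitive $F(z):=\lim_n F_n(\tfrac12)+\int_{1/2}^{z}G(w)\,dw$, which is holomorphic with $F'=G$ and is the uniform-on-compacts limit of $F_n$. Then I set $f(z):=\textrm{exp}\bigl[\tfrac{1-\sigma}{\sigma}e_k(z^{\alpha})\bigr]F(z)$, holomorphic on $\mathbb D\setminus(-1,0]$. A direct check gives $f(\tfrac12)=a$, and applying Proposition~\ref{pro1} to $f$ together with $F'=G$ and the fact that $\bigl(e_k(z^{\alpha})\bigr)'=\alpha z^{\alpha-1}e_{k-1}(z^{\alpha})$ is zero-free on the domain yields $\tfrac{d^{\sigma}}{dz_{\alpha,k}}f=g$. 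Hence $T(f)=(a,g)$, so $f\in\mathcal D^{\sigma}_{\alpha,k}$, and finally $\|f_n-f\|_{\mathcal D^{\sigma}_{\alpha,k}}^2=\alpha|f_n(\tfrac12)-a|^2+\|\tfrac{d^{\sigma}}{dz_{\alpha,k}}f_n-g\|_{A^2}^2\to 0$, which proves completeness.

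The step I expect to be the main obstacle is the passage from $A^2$-convergence to uniform-on-compacts convergence, that is, justifying the local Bergman estimate on the slit disk $\mathbb D\setminus(-1,0]$ and, with it, that the $A^2$-limit $g$ is genuinely holomorphic rather than merely $L^2$; everything afterwards (primitivization via Proposition~\ref{pro1}, identification of the two coordinates of $T(f)$, and the final norm estimate) is routine once this is in hand. A secondary point to check carefully is that the zero-freeness of $\bigl(e_k(z^{\alpha})\bigr)'$ needed in the last identification is available precisely under the standing hypothesis $e_{k-1}(z^{\alpha})\neq 0$ on $\mathbb D\setminus(-1,0]$.
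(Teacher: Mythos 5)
Your proposal is correct and follows essentially the same route as the paper: extract the limits of $f_n(\tfrac{1}{2})$ in $\mathbb C$ and of $\frac{d^{\sigma}}{dz_{\alpha,k}}f_n$ in $L^2$, upgrade the latter to locally uniform convergence via a pointwise sup-versus-$L^2$ estimate on compacta, and then reconstruct the limit function by primitivizing through Proposition \ref{pro1} and dividing out the factor $\textrm{exp}[\frac{\sigma-1}{\sigma}e_k(z^{\alpha})]$. The only real difference is that the paper additionally establishes local boundedness of $(f_n)$ and invokes Vitali's theorem to obtain locally uniform convergence of $f_n$ itself, whereas you define the limit directly as a primitive of the limiting derivative, which is a mild streamlining of the same argument.
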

\begin{proof}
Let $(f_n) $ be a Cauchy sequence of elements of $ \mathcal D^{\sigma }_{\alpha, k}$. For any $\epsilon > 0 $ there exists  $N\in \mathbb N$ such that 
 \begin{align*}       
\sqrt{\alpha \|f_n(\frac{1}{2})- f_m(\frac{1}{2})\|^2  +  \int_{\mathbb D\setminus (-1,0] }  \| \frac{ {d}^{\sigma}}{ d z _{\alpha, k} } f_n(z) - \frac{ {d}^{\sigma}}{ d z _{\alpha, k} } f_m(z) \|^2 d_z\mu } \ < & \ \epsilon,   
\end{align*}
 for all  $n,m> N$. Therefore, $(f_n(\frac{1}{2}))$ is a Cauchy sequence of complex numbers and  there exists $w\in \mathbb C$ such that  
 $\displaystyle \lim_{n \to \infty } f_n(\frac{1}{2})=w$. Also there exists $g\in L_2(\mathbb D\setminus (-1,0] ,\mathbb C)$ such that 
the sequence $(\dfrac{ {d}^{\sigma}}{ d z _{\alpha, k} } f_n )$ converges to $g$ in $L_2(\mathbb D\setminus (-1,0],\mathbb C)$.  

\noindent
In addition, $\dfrac{ {d}^{\sigma }}{ d z _{\alpha, k} } f _n \in \textrm{Hol}(\mathbb D\setminus (-1,0] )$ for all $n\in \mathbb N$ and for any compact subset $C \subset \mathbb D\setminus (-1,0]$  Cauchy formula allows us to obtain a  constant $\lambda _C>0$ sucht that 
$$\sup\{|\dfrac{ {d}^{\sigma }}{dz_{\alpha, k}} f_n (z)| \ \mid  \ z\in K \} \leq  \lambda_C  \sqrt{\int_{\mathbb D\setminus (-1,0]} | \frac{ {d}^{\sigma }}{ d z _{\alpha, k} } f_n(z) |^2 d_z\mu } \leq \lambda_C \|f \|^{\sigma }_{\alpha, k}.$$
Then  there exists $h\in \textrm{Hol}(\mathbb D\setminus (-1,0] ) $  such that 
$(\dfrac{ {d}^{\sigma }}{ d z _{\alpha, k} } f_n )$ converges to $h$ unifomally on compact subsets of $\mathbb D\setminus (-1,0]$ and using Proposition \ref{pro1} we see that the sequence     
$$\dfrac{d}{dz} \left(\textrm{exp}[\dfrac{\sigma-1}{\sigma} e_k (z^{\alpha})] f_n(z) \right), \quad z\in \mathbb D\setminus (-1,0]$$ 
converges to  
$$l(z) = \dfrac{1}{\sigma}  \textrm{exp}[ \dfrac{\sigma-1}{\sigma} e_k (z^{\alpha})] \left(\dfrac{d}{dz} e_k (z^{\alpha})\right) h(z), \quad z\in \mathbb D\setminus (-1,0]$$  
uniformly on compact subsets of $\mathbb D\setminus (-1,0]$. 

On the other hand, let $K \subset \mathbb D\setminus [0,1)$ be a compact set such that $\frac{1}{2}\in K $. There exist $M > 0$ and  
$N\in \mathbb N$ such that $|l(z)| \leq M$, 
\begin{align*}     
|\dfrac{d}{dz} \left(\textrm{exp}[ \dfrac{\sigma-1}{\sigma}  e_k (z^{\alpha}) ] f_n(z) \right) - l(z)| < & 1, \\
   |\textrm{exp}[ \dfrac{\sigma-1}{\sigma}  e_k ((\frac{1}{2})^{\alpha}) ] f_n(\frac{1}{2})| < & M,  
\end{align*}
for all $z\in K$ and  $n\geq N$. Then,   
\begin{align*}
|\dfrac{d}{dz} \left(\textrm{exp}[\dfrac{\sigma-1}{\sigma} e_k (z^{\alpha})] f_n(z) \right)| \leq & | \dfrac{d}{dz} \left(\textrm{exp}[ \dfrac{\sigma-1}{\sigma}  e_k (z^{\alpha}) ] f_n(z) \right) - l(z) |  + |l(z)| \\
 < &   1 + M,   
\end{align*} 
for all $z\in K$ and  $n\geq N$. Let $\gamma$ be a rectifiable path contained in $K$ and from $\dfrac{1}{2}$ to any $z\in K$ we see that   
\begin{align*} 
&  | \textrm{exp}[ \dfrac{\sigma-1}{\sigma}  e_k (z^{\alpha}) ] f_n(z) |
\\
\leq &  | \textrm{exp}[ \dfrac{\sigma-1}{\sigma}  e_k (z^{\alpha}) ] f_n(z) -   
\textrm{exp}[ \dfrac{\sigma-1}{\sigma}  e_k ((\frac{1}{2})^{\alpha}) ] f_n(\frac{1}{2})
|  +   
|\textrm{exp}[ \dfrac{\sigma-1}{\sigma}  e_k ((\frac{1}{2})^{\alpha}) ] f_n(\frac{1}{2})
|  \\ 
=  &  | \int_{\gamma}  \dfrac{d}{d\zeta } \left(\textrm{exp}[ \dfrac{\sigma-1}{\sigma}  e_k (\zeta  ^{\alpha}) ] f_n(\zeta  ) \right)
   d\zeta  |  +    
|\textrm{exp}[ \dfrac{\sigma-1}{\sigma}  e_k ((\frac{1}{2})^{\alpha}) ] f_n(\frac{1}{2})
|  \\
 \leq  & \int_{\gamma}   | \dfrac{d}{d\zeta} \left(\textrm{exp}[ \dfrac{\sigma-1}{\sigma}  e_k (\zeta ^{\alpha}) ] f_n(\zeta) \right)|| d\zeta|  + 
|\textrm{exp}[ \dfrac{\sigma-1}{\sigma}  e_k ((\frac{1}{2})^{\alpha}) ] f_n(\frac{1}{2})
| \\
    \leq &   ( 1 + M  ) V(\gamma) +    
M , \quad \forall n\geq N, 
   \end{align*} 
where $V(\gamma)$ is the length of $\gamma$. Therefore,   
\begin{align*} 
   | f_n(z) |
  \leq   | \textrm{exp}[ \dfrac{1-\sigma}{\sigma}  e_k (z^{\alpha}) ]  |  \left(  (1 + M  ) V(\gamma)  + M  \right) \leq A   \left(  (1 + M  ) V(\gamma)  + M  \right) , 
   \end{align*} 
for all $
z\in K$ and $ n\geq N$,    
where $ | \textrm{exp}[ \dfrac{1-\sigma}{\sigma}  e_k (z^{\alpha}) ]  |\leq A $ for all $
z\in K$, i.e.,  $(f_n)$ is a locally bounded family of holomorphic functions. In addition, 
 let $p\in \textrm{Hol}(\mathbb D\setminus (-1,0] ) $ be a primitive of $l$. 
 Then   for any rectifiable path $\gamma$ contained in $\mathbb D\setminus (-1,0] $ from $\dfrac{1}{2}$ to any $z\in\mathbb D\setminus (-1,0] $ we see that   
\begin{align*} & \lim_{n\to \infty }  \left(\textrm{exp}[ \dfrac{\sigma-1}{\sigma}  e_k (z^{\alpha}) ] f_n(z) - \textrm{exp}[  \dfrac{\sigma-1}{\sigma}  e_k ((\dfrac{1}{2})^{\alpha}) ] f_n(\dfrac{1}{2})  \right) \\
=  & 
\lim_{n\to \infty } \int_{\gamma} \dfrac{d}{d\zeta} \left(  \textrm{exp}[  \dfrac{\sigma-1}{\sigma}  e_k (\zeta^{\alpha}) ] f_n(\zeta)  \right)  d\zeta  =
 \int_{\gamma} p'(\zeta)d\zeta = p(z)- p(\dfrac{1}{2}) .  
\end{align*} 
From the previous puntual convergence we define: 
 \begin{align*}  & \lim_{n\to \infty }   f_n(z)   =  \textrm{exp}[ \dfrac{1-\sigma}{\sigma}  e_k (z^{\alpha}) ]  \left[   p(z)+  \lambda \right] =:f(z) , \quad \forall z\in  \mathbb D\setminus (-1,0] ,
\end{align*} 
where   
$$\lambda = - p(\dfrac{1}{2}) + \textrm{exp}[\dfrac{\sigma-1}{\sigma} e_k ((\dfrac{1}{2})^{\alpha})] w.$$ 
Vitali's theorem implies that $(f_n)$ converges to $f$ uniformly on compact subsets of $\mathbb D\setminus (-1,0]$. Then $( \frac{{d}^{\sigma }}{ d z _{\alpha, k} } f_n)$ converges to $\frac{ {d}^{\sigma}}{dz_{\alpha, k}} f$ uniformly on compact subsets of $\mathbb D\setminus (-1,0]$ and as a direct consequence we see that $\frac{ {d}^{\sigma }}{dz_{\alpha, k}} f = g$, a.e. on $\mathbb D\setminus (-1,0]$, i.e.,  $f\in \mathcal D^{\sigma }_{\alpha, k}$.
\end{proof}

\begin{rem}
From direct computations we see that $ \langle  \cdot ,\cdot  \rangle_{\mathcal D^{\sigma }_{\alpha, k}} $ is a complex inner product in  $  \mathcal D^{\sigma }_{\alpha, k}  $ and as a consequence     $\left(  \mathcal D^{\sigma }_{\alpha, k}  ,  \langle  \cdot ,\cdot  \rangle_{\mathcal D^{\sigma }_{\alpha, k}} \right) $ is a complex  Hilbert space.
\end{rem}
The following statement shows the behaviour of the expansion series of elements of $\textrm{Hol}(\mathbb D) \cap \mathcal D^{\sigma }_{\alpha, k}$.

\begin{prop}\label{pro6} 
 If     $f  \in \textrm{Hol}(\mathbb D) \cap \mathcal D^{\sigma }_{\alpha, k}$ such that 
 $$f(z)= \sum_{n=0}^\infty z^n a_n ,\quad \forall z\in \mathbb D,$$
 then  
 \begin{align*}  & \|  f\|^2_{\mathcal D^{\sigma }_{\alpha, k}}  =  \alpha   |f(\frac{1}{2})|^2 + (1-\sigma)^2 \pi \sum_{n=0}^\infty  \frac{1}{n+1} | a_n|^2     
  \\
  &  +  \frac{\sigma^2 }{\alpha^2 }  \sum_{n,m=0}^\infty  (n+1)(m+1) a_{n+1} \bar a_{m+1}             \lim_{
 {   \begin{array}{c} 
  \rho \to 1^-   \\ 
  \theta_1\to -\pi^+\\
  \theta_2 \to  \pi^- \end{array}
  } }    \int_{0}^\rho \int_{\theta_1}^{\theta_2}  
           \frac{  r ^{  n+m+3 -2\alpha } e^{i \theta (n-m)} } { | e_{k-1} ( r^{\alpha} e^{ i \alpha \theta}) |^2    }        dr d\theta 
    \\ 
&  +
 \frac{  (1-\sigma) \sigma }{\alpha}   \sum_{n,m=0}^\infty 2 (m+1) \Re  \left(   a_{m+1}   \bar  a_n          \lim_{
 {  \begin{array}{c} 
  \rho \to 1^-   \\ 
  \theta_1\to -\pi^+\\
  \theta_2 \to  \pi^- \end{array}
  } }    \int_{0}^\rho \int_{\theta_1}^{\theta_2}         \frac{ r^{n+m+ 2-\alpha }  e^{ i\theta ( m +1 -n - \alpha)} }{    e_{k-1} (  r^{\alpha} e^{ i \theta \alpha})    }     dr d\theta
 \right) .    \end{align*}
\end{prop}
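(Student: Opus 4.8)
The plan is to substitute the Taylor expansion of $f$ into the explicit formula for the operator, expand the squared modulus of the integrand into three pieces, pass to polar coordinates, and observe that exactly one of the three pieces collapses under angular orthogonality while the other two retain their double sums.

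First I would record that, since $f(z)=\sum_{n\ge 0} z^n a_n$ is holomorphic on $\mathbb D$, one has $f'(z)=\sum_{n\ge 0}(n+1)z^n a_{n+1}$, and with $\chi_1=1-\sigma$, $\chi_0=\sigma$ the operator takes the form
$$\frac{ {d}^{\sigma}}{dz_{\alpha, k}} f(z)=(1-\sigma)f(z)+\frac{\sigma}{\alpha}\,\frac{f'(z)}{z^{\alpha-1}e_{k-1}(z^{\alpha})}=:A(z)+B(z).$$
Then $\big|\tfrac{ {d}^{\sigma}}{dz_{\alpha, k}}f\big|^2=|A|^2+|B|^2+2\Re(\bar A B)$, so the integral in the norm splits into three contributions, to which the point-evaluation term $\alpha|f(\tfrac12)|^2$ is simply adjoined, as it appears verbatim in the definition of the norm.

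Next I would write $z=re^{i\theta}$ with $r\in[0,1)$ and $\theta\in(-\pi,\pi)$, so that $d_z\mu=r\,dr\,d\theta$ is area measure and $z^{\alpha-1}=r^{\alpha-1}e^{i(\alpha-1)\theta}$, $z^{\alpha}=r^{\alpha}e^{i\alpha\theta}$ on the principal branch. For the piece $|A|^2=(1-\sigma)^2\sum_{n,m}r^{n+m}e^{i(n-m)\theta}a_n\bar a_m$ there is no $e_{k-1}$ factor, the removed slit $(-1,0]$ has measure zero, and integrating $e^{i(n-m)\theta}$ over the full period annihilates the off-diagonal terms; the radial integral $\int_0^1 r^{2n+1}dr=\frac1{2n+2}$ then yields the Bergman-type sum $(1-\sigma)^2\pi\sum_n\frac{|a_n|^2}{n+1}$. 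For $|B|^2$ and $2\Re(\bar A B)$, by contrast, the factor $e_{k-1}(r^{\alpha}e^{i\alpha\theta})$ makes the $\theta$-dependence genuinely non-periodic, so no such collapse occurs and the double sums survive. Careful bookkeeping of the powers of $r$ --- the extra $r$ from the measure together with $r^{2-2\alpha}$ from $|z^{\alpha-1}|^{-2}$, respectively $r^{1-\alpha}$ from $z^{1-\alpha}$ --- and of the residual phases $e^{i\theta(n-m)}$, respectively $e^{i\theta(m+1-n-\alpha)}$, reproduces the two iterated integrals with radial exponents $n+m+3-2\alpha$ and $n+m+2-\alpha$ exactly as stated.

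The hard part will be justifying the termwise integration and the exchange of the three limiting operations: summation over $n,m$, integration, and the boundary limits $\rho\to1^-$, $\theta_1\to-\pi^+$, $\theta_2\to\pi^-$. I would first restrict to a compact sector $\{0\le r\le\rho,\ \theta_1\le\theta\le\theta_2\}\subset\mathbb D\setminus(-1,0]$, on which the series for $f$ and $f'$ converge uniformly and $1/e_{k-1}(z^{\alpha})$ is bounded (it is nonvanishing by hypothesis), so finite partial sums may be integrated term by term; integrability near the origin is not an issue since the radial exponents $n+m+3-2\alpha\ge1$ and $n+m+2-\alpha\ge1$ exceed $-1$ even for $\alpha<1$. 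To reach the full region I would apply Tonelli's theorem to the nonnegative integrands $|A|^2$ and $|B|^2$, which permits arbitrary reordering and forces absolute convergence of the cross term via $|2\Re(\bar A B)|\le|A|^2+|B|^2$; the hypothesis $\|f\|_{\mathcal D^{\sigma}_{\alpha,k}}<\infty$ guarantees finiteness of all three quantities, and dominated convergence then legitimizes passing the boundary limits inside the sums. Collecting the four contributions gives the asserted identity.
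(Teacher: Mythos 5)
Your proposal is correct and follows essentially the same route as the paper's proof: substitute the Taylor series into the explicit form of $\frac{d^{\sigma}}{dz_{\alpha,k}}$, expand the squared modulus into the three pieces $|A|^2$, $|B|^2$, $2\Re(\bar A B)$, and integrate in polar coordinates $z=re^{i\theta}$ over the truncated sector before passing to the limits $\rho\to 1^-$, $\theta_1\to-\pi^+$, $\theta_2\to\pi^-$. The paper compresses all of this into ``direct computations,'' so your added care about termwise integration, Tonelli, and the exchange of limits only makes explicit what the paper leaves implicit.
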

\begin{proof}
From direct computations we see that  
\begin{align*}  & |(1-\sigma)  f (z)   +   \sigma \frac{f '(z) }{ \alpha z^{\alpha-1} e_{k-1}(z^\alpha)} |^2 \\
    =   &    (1-\sigma)^2  \sum_{n,m=0}^\infty    a_n \bar a_m      z^n   {(\bar z)}^m     
  \\
  &  +  \frac{\sigma^2 }{\alpha^2 }  \sum_{n,m=0}^\infty  (n+1)(m+1) a_{n+1} \bar a_{m+1}              \frac{ z^{n+1-\alpha }  {(\bar z)}^{m+1-\alpha } }{ | e_{k-1} ( z^{\alpha}) |^2    }      d_z\mu    \\ 
&  +  (1-\sigma) \sigma \frac{1}{\alpha}   \sum_{n,m=0}^\infty  (m+1) \left(\bar a_{m+1} a_n \frac{z^n {(\bar z)}^{m+ 1-\alpha }}{ e _{k-1}( {(\bar z)}^{\alpha})} +  a_{m+1} \bar a_n \frac{z^{m+ 1-\alpha } {(\bar z)}^n}{e_{k-1} ({z}^{\alpha}) } \right). 
\end{align*}
Consider $z=r e^{i\theta}$ where $0<r <\rho <1$ and $-\pi<\theta_1 <\theta <\theta_2 < \pi$ to compute  the integration.
\end{proof}

\begin{cor}\label{corK1Infty}
According to the previous proposition consider $f\in  \textrm{Hol}(\mathbb D)$ and 
$$f(z)= \sum_{n=0}^\infty z^n a_n ,\quad \forall z\in \mathbb D,$$
then we have the following identities:

\begin{enumerate}
\item 
If $f\in \mathcal D^{\sigma }_{\alpha, 1}$, then  
 \begin{align*}  & \|  f\|^2_{\mathcal D^{\sigma }_{\alpha, 1}} = \alpha |f(\frac{1}{2})|^2 + (1-\sigma)^2 \pi \sum_{n=0}^\infty  \frac{1}{n+1} | a_n|^2     
   +  \frac{\sigma^2 }{\alpha^2 }  \sum_{n =0}^\infty  \frac{(n+1)^2}{ 2n +4-2\alpha} |a_{n+1}| ^2  \\
   &  +
 \frac{  (1-\sigma) \sigma }{\alpha}   \sum_{n,m=0}^\infty 
 \frac{ 2 (m+1) }{ (n+m+3-\alpha) (m-n+1-\alpha)} \Im \left[ (e^{i2\pi (m-n+1 -\alpha)}-1) a_{m+1} \bar a_n \right].    
\end{align*}

\item  
If $f\in \mathcal D^{\sigma }_{\alpha, \infty}$,  then  
\begin{align*}  & \|  f\|^2_{\mathcal D^{\sigma }_{\alpha, \infty}}  =  \alpha   |f(\frac{1}{2})|^2 + (1-\sigma)^2 \pi \sum_{n=0}^\infty  \frac{1}{n+1} | a_n|^2     
  \\
  &  +  \frac{\sigma^2 }{\alpha^2 }  \sum_{n,m=0}^\infty  (n+1)(m+1) a_{n+1} \bar a_{m+1}          
  \int_{0}^1 \int_{-\pi}^{ \pi}\frac{r^{  n+m+3 -2\alpha } e^{i \theta (n-m)}} {\textrm{exp} (2r^{\alpha} \cos  (\alpha \theta))}        dr d\theta 
    \\ 
&  +
 \frac{  (1-\sigma) \sigma }{\alpha} \sum_{n,m=0}^\infty 2 (m+1) \Re  \left(   a_{m+1}   \bar  a_n         
   \int_{0}^1 \int_{-\pi}^{ \pi} \frac{ r^{n+m+ 2-\alpha} e^{ i\theta ( m +1 -n - \alpha)} }{\textrm{exp} (r^{\alpha} e^{ i \theta \alpha}) } dr d\theta \right).    
\end{align*}
\end{enumerate}
\end{cor}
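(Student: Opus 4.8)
The plan is to obtain both identities as direct specializations of Proposition \ref{pro6}, the only inputs being the explicit form of $e_{k-1}$ at the two distinguished orders and, in the first case, the evaluation of elementary polar integrals. Throughout I would substitute $z=re^{i\theta}$ with $0<r<\rho<1$ and $-\pi<\theta_1<\theta<\theta_2<\pi$, and use the convention $e_\infty=e^{(\cdot)}$, so that $e_{k-1}(z^\alpha)$ equals $1$ when $k=1$ and $e^{z^\alpha}$ when $k=\infty$. The point-evaluation term $\alpha|f(\tfrac12)|^2$ is carried over unchanged, so only the three integral groups require attention.

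I would dispatch the case $k=\infty$ first, since it requires no integration. On the principal branch $z^\alpha=r^\alpha\cos(\alpha\theta)+ir^\alpha\sin(\alpha\theta)$, whence
\begin{align*}
|e_{k-1}(r^\alpha e^{i\alpha\theta})|^2=\left|e^{\,r^\alpha e^{i\alpha\theta}}\right|^2=e^{2r^\alpha\cos(\alpha\theta)}=\textrm{exp}\bigl(2r^\alpha\cos(\alpha\theta)\bigr),
\end{align*}
while the cross-term denominator is simply $e_{k-1}(r^\alpha e^{i\theta\alpha})=\textrm{exp}(r^\alpha e^{i\theta\alpha})$. For $\alpha\in(0,1]$ these denominators are continuous and bounded away from $0$ on the closed region, and the radial factors $r^{n+m+3-2\alpha}$ and $r^{n+m+2-\alpha}$ are integrable at $r=0$; hence dominated convergence replaces the nested limits $\rho\to1^-$, $\theta_1\to-\pi^+$, $\theta_2\to\pi^-$ by ordinary integrals over $[0,1]\times[-\pi,\pi]$. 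Inserting these two expressions into Proposition \ref{pro6} reproduces the second identity.

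The case $k=1$ is where the computation lives, since now $e_0\equiv1$ and every integral is elementary. In the $\sigma^2$-term the angular factor $e^{i\theta(n-m)}$ carries an \emph{integer} exponent, so $\int_{-\pi}^{\pi}e^{i\theta(n-m)}\,d\theta=2\pi\,\delta_{nm}$ annihilates the off-diagonal terms; combined with the radial integral $\int_0^1 r^{2n+3-2\alpha}\,dr=(2n+4-2\alpha)^{-1}$ this leaves a single sum in $n$ with coefficient proportional to $(n+1)^2(2n+4-2\alpha)^{-1}$, as in the first identity. For the cross-term the radial part is again elementary, $\int_0^1 r^{n+m+2-\alpha}\,dr=(n+m+3-\alpha)^{-1}$, but the angular exponent $c:=m-n+1-\alpha$ is now \emph{non-integer}, and I would evaluate
\begin{align*}
\int_{-\pi}^{\pi}e^{ic\theta}\,d\theta=\frac{e^{ic\pi}-e^{-ic\pi}}{ic}=\frac{e^{-ic\pi}\bigl(e^{2\pi ic}-1\bigr)}{ic}.
\end{align*}
Substituting this boundary value into the $\Re(\cdot)$ of Proposition \ref{pro6} and using $\Re(-iW)=\Im(W)$ converts the real part into the stated $\Im\bigl[(e^{i2\pi(m-n+1-\alpha)}-1)\,a_{m+1}\bar a_n\bigr]$ and supplies the factor $(n+m+3-\alpha)^{-1}(m-n+1-\alpha)^{-1}$.

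The one genuinely delicate point is this last angular evaluation. Because $\alpha\in(0,1]$ pushes $c=m-n+1-\alpha$ off the integers, the orthogonality that kills the off-diagonal $\sigma^2$-terms fails, so the cross term keeps its full double sum and one must track the endpoint phases $e^{\pm ic\pi}$ at the branch cut $\theta=\pm\pi$ exactly. The factorization $e^{ic\pi}-e^{-ic\pi}=e^{-ic\pi}(e^{2\pi ic}-1)$, together with the interchange of $\Re$ with the limits $\theta_1\to-\pi^+,\ \theta_2\to\pi^-$, is precisely what isolates the $e^{2\pi ic}-1$ factor and turns $\Re$ into $\Im$; once this is in hand, reassembling the three groups of terms yields both identities.
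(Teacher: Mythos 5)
Your overall route is exactly the paper's: the paper's proof consists of the single observation that $e_{k-1}\equiv 1$ for $k=1$ and $e_{k-1}=\exp$ for $k=\infty$, followed by ``direct computations,'' and your treatment of the $k=\infty$ case ($|e^{r^\alpha e^{i\alpha\theta}}|^2=e^{2r^\alpha\cos(\alpha\theta)}$, dominated convergence to remove the nested limits) and of the diagonal $\sigma^2$-term for $k=1$ (orthogonality $\int_{-\pi}^{\pi}e^{i\theta(n-m)}d\theta=2\pi\delta_{nm}$ plus the elementary radial integral) is the intended computation. Two remarks: the orthogonality actually produces a factor $2\pi/(2n+4-2\alpha)$, so your hedge ``proportional to'' conceals a $2\pi$ that is absent from the printed formula (the $(1-\sigma)^2$-term does retain its $\pi$, so the normalizations in the statement are not uniform).

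The step that does not close as written is the cross-term phase bookkeeping, which you yourself flag as the delicate point. With $c=m-n+1-\alpha$ you have
\begin{equation*}
\int_{-\pi}^{\pi}e^{ic\theta}\,d\theta=\frac{e^{ic\pi}-e^{-ic\pi}}{ic}=\frac{2\sin(\pi c)}{c},
\end{equation*}
which is a \emph{real} number; hence your computation gives $\frac{2\sin(\pi c)}{c}\,\Re\bigl(a_{m+1}\bar a_n\bigr)$ for the angular--radial factor, whereas the printed expression $\frac{1}{c}\Im\bigl[(e^{2\pi ic}-1)a_{m+1}\bar a_n\bigr]$ equals $\frac{2\sin(\pi c)}{c}\,\Re\bigl(e^{i\pi c}a_{m+1}\bar a_n\bigr)$. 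Your identity $\Re(-iW)=\Im(W)$ applied to $W=\frac{e^{-ic\pi}(e^{2\pi ic}-1)}{c}a_{m+1}\bar a_n$ yields $\frac{1}{c}\Im\bigl[e^{-i\pi c}(e^{2\pi ic}-1)a_{m+1}\bar a_n\bigr]$, i.e.\ the residual phase $e^{-i\pi c}$ survives and is \emph{not} equal to $1$ since $c\notin\mathbb Z$; asserting that this ``converts the real part into the stated'' expression is therefore false as algebra. The discrepancy is a phase $e^{i\pi c}$ and appears to originate in the corollary's statement itself (it is what one gets by evaluating the antiderivative over $[0,2\pi]$ instead of $[-\pi,\pi]$), but a correct proof must either carry the $e^{-i\pi c}$ explicitly or conclude with the real form $\frac{2\sin(\pi c)}{c}\Re(a_{m+1}\bar a_n)$ and note the mismatch; it cannot simply declare agreement with the printed formula.
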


\begin{proof}
Follows from direct computations and  note  that $k=1$ implies  $e_{k-1} (w) =1$, and  if $k=\infty$ then $e_{k-1} (w) =\textrm{exp}(w) =e^w$ for all $w\in \mathbb C$.
\end{proof}

\begin{prop}\label{CReprodProp} Reproducing property. 
Suppose  $\sigma\in (0,1)$ and $f\in \textrm{Hol}(\mathbb D)$ such that $\dfrac{ {d}^{\sigma}}{ d z _{\alpha, k} } f $ belongs to the complex  Bergman space associated to $\mathbb D$. Then
\begin{align*}
& f (z)   
 = -  \frac{ \sigma }{1-\sigma  }\frac{  f   '(z) }{ \alpha  z^{ \alpha-1}  e_{k-1} ( z^{\alpha})     } +  \frac{ 1 }{1-\sigma  }  \int_{\mathbb D} B(z,\zeta)  \frac{ {d}^{\sigma}}{ d z _{\alpha, k} } f(\zeta) d_{\zeta}\mu 
 \end{align*} 
  and 
  \begin{align*}
 &  f(z) =  \textrm{exp}[\dfrac{\sigma-1}{\sigma}  ( \  e_k ((\frac{1}{2})^{\alpha}) - e_k (z^{\alpha}) \  ) ] f(\frac{1}{2}) +  \int_{\mathbb D} \mathcal K_{\frac{1}{2}}(z,\zeta)   \frac{ {d}^{\sigma}}{ d z _{\alpha, k} } f(\zeta) d_{\zeta}\mu, 
\end{align*}
where
$$ \mathcal K_{\frac{1}{2}}(z,\zeta) =  \textrm{exp}[\dfrac{1-\sigma}{\sigma}  e_k (z^{\alpha}) ]  \int_{\gamma_{\frac{1}{2}, z}}  \frac{1}{\sigma}  \textrm{exp}[\dfrac{\sigma-1}{\sigma}  e_k (w^{\alpha}) ]\left(\frac{d}{dw}    e_k (w^{\alpha}) \right)     B(w,\zeta) dw , $$ 
 $B(\cdot, \cdot)$ is the Bergman kernel associated to $\mathbb D$ and $\gamma_{\frac{1}{2}, z}$ is a rectifiable path contained in $\mathbb D \setminus (-1,0]$ from 
  $\frac{1}{2}$ to  $z$.
\end{prop}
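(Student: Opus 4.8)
The plan is to base both identities on the reproducing property of the Bergman kernel $B(\cdot,\cdot)$ of $\mathbb{D}$. The standing hypothesis that $g := \frac{d^{\sigma}}{dz_{\alpha,k}} f$ lies in the complex Bergman space $A^{2}(\mathbb{D})$ means precisely that
$$g(z) = \int_{\mathbb{D}} B(z,\zeta)\, g(\zeta)\, d_\zeta\mu, \qquad z\in\mathbb{D}.$$
Everything then follows by inserting the explicit form of the operator: recall that with $\beta=1$, $\chi_0(\sigma,z)=\sigma$ and $\chi_1(\sigma,z)=1-\sigma$ one has $g(z) = (1-\sigma) f(z) + \sigma \frac{f'(z)}{\alpha z^{\alpha-1} e_{k-1}(z^\alpha)}$, after which each identity is obtained either by solving algebraically or by integrating an exact derivative.

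For the first identity I would substitute this expression for $g$ into the reproducing formula, obtaining
$$(1-\sigma) f(z) + \sigma \frac{f'(z)}{\alpha z^{\alpha-1} e_{k-1}(z^\alpha)} = \int_{\mathbb{D}} B(z,\zeta)\, \frac{d^{\sigma}}{dz_{\alpha,k}} f(\zeta)\, d_\zeta\mu.$$
Dividing by $1-\sigma$ (legitimate since $\sigma\in(0,1)$) and transposing the derivative term to the right-hand side produces exactly the claimed formula; this step is purely algebraic.

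For the second identity I would set $F(z) = \textrm{exp}[\frac{\sigma-1}{\sigma} e_k(z^\alpha)] f(z)$ and invoke Proposition \ref{pro1}, which identifies $F'(z)$ with $\frac{1}{\sigma}\,\textrm{exp}[\frac{\sigma-1}{\sigma}e_k(z^\alpha)]\bigl(\frac{d}{dz}e_k(z^\alpha)\bigr) g(z)$. Integrating $F'$ along $\gamma_{\frac{1}{2},z}$ writes $F(z)-F(\frac{1}{2})$ as a line integral of this product. I would then replace $g(w)$ under the line integral by its Bergman reproduction, interchange the line integral over $\gamma_{\frac{1}{2},z}$ with the area integral over $\mathbb{D}$, and finally multiply through by $\textrm{exp}[\frac{1-\sigma}{\sigma} e_k(z^\alpha)]$ to pass from $F$ back to $f$. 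The contribution of $F(\frac{1}{2})$ collapses to $\textrm{exp}[\frac{\sigma-1}{\sigma}(e_k((\tfrac12)^\alpha)-e_k(z^\alpha))] f(\frac12)$, while the reordered double integral is exactly $\int_{\mathbb{D}} \mathcal{K}_{\frac{1}{2}}(z,\zeta)\, \frac{d^{\sigma}}{dz_{\alpha,k}} f(\zeta)\, d_\zeta\mu$, since the factor $\textrm{exp}[\frac{1-\sigma}{\sigma} e_k(z^\alpha)]$ does not depend on $\zeta$ and may be pulled inside to form the kernel.

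The step I expect to be the main obstacle is the interchange of the line integral over $\gamma_{\frac{1}{2},z}$ with the area integral over $\mathbb{D}$. I would justify it by Fubini's theorem: the path $\gamma_{\frac{1}{2},z}$ is a fixed compact rectifiable curve contained in $\mathbb{D}\setminus(-1,0]$, so the scalar factor $\frac{1}{\sigma}\,\textrm{exp}[\frac{\sigma-1}{\sigma}e_k(w^\alpha)]\bigl(\frac{d}{dw}e_k(w^\alpha)\bigr)$ is continuous, hence bounded, in $w$ along it, and $B(w,\zeta)$ is bounded on $\gamma_{\frac{1}{2},z}\times\mathbb{D}$ because the curve stays in a compact subset of $\mathbb{D}$ away from $\partial\mathbb{D}$; together with $g=\frac{d^{\sigma}}{dz_{\alpha,k}} f\in A^{2}(\mathbb{D})\subset L^{1}(\mathbb{D})$ this makes the integrand absolutely integrable on $\gamma_{\frac{1}{2},z}\times\mathbb{D}$, so the order of integration may be exchanged. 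I would also record that the path is chosen inside $\mathbb{D}\setminus(-1,0]$ precisely so that $w^\alpha$, $e_k(w^\alpha)$ and the nonvanishing of $e_{k-1}(w^\alpha)$ keep all factors holomorphic along it, and that the slit $(-1,0]$ is of measure zero and therefore irrelevant to the area integral.
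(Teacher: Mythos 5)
Your proposal is correct and follows essentially the same route as the paper: the first identity is obtained by substituting the explicit form of $\frac{d^{\sigma}}{dz_{\alpha,k}}f$ into the Bergman reproducing formula and solving algebraically, and the second by applying Proposition \ref{pro1}, integrating the exact derivative along $\gamma_{\frac{1}{2},z}$, and interchanging the line and area integrals to form $\mathcal K_{\frac{1}{2}}$. Your explicit Fubini justification for the interchange is a detail the paper leaves implicit, but the argument is the same.
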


\begin{proof} The reproducing property of the Bergman kernel  gives us directly the  first  identity since 
\begin{align*}
& (1-\sigma )f (z) +    \sigma \frac{  f   '(z) }{ \alpha  z^{ \alpha-1}  e_{k-1} ( z^{\alpha})     }  
 =  \int_{\mathbb D} B(z,\zeta)  \frac{ {d}^{\sigma}}{ d z _{\alpha, k} } f(\zeta) d_{\zeta}\mu .
 \end{align*}
 On the other hand, using Proposition \ref{pro1} we see that  
 \begin{align*}
& \frac{d}{dz} \left(\textrm{exp}[\dfrac{\sigma-1}{\sigma}  e_k (z^{\alpha}) ] f(z) \right) \\
=&  \int_{\mathbb D} \frac{1}{\sigma}  \textrm{exp}[\dfrac{\sigma-1}{\sigma}  e_k (z^{\alpha}) ]\left(\frac{d}{dz}    e_k (z^{\alpha}) \right)     B(z,\zeta)  \frac{ {d}^{\sigma}}{ d z _{\alpha, k} } f(\zeta) d_{\zeta}\mu. 
\end{align*} 
Therefore, 
\begin{align*} 
& \textrm{exp}[\dfrac{\sigma-1}{\sigma}  e_k (z^{\alpha}) ] f(z) =  \textrm{exp}[\dfrac{\sigma-1}{\sigma}  e_k ((\frac{1}{2})^{\alpha}) ] f(\frac{1}{2}) \\ 
 &+  \int_{\mathbb D} \left(  \int_{\gamma_{\frac{1}{2}, z}}  \frac{1}{\sigma}  \textrm{exp}[\dfrac{\sigma-1}{\sigma}  e_k (w^{\alpha}) ]\left(\frac{d}{dw}    e_k (w^{\alpha}) \right)     B(w,\zeta) dw\right) \frac{ {d}^{\sigma}}{ d z _{\alpha, k} } f(\zeta) d_{\zeta}\mu. 
\end{align*}
\end{proof}
 
\begin{rem}\label{RemDirchlet-Bergman}
If we chose $\sigma=\alpha^2$ then we obtain the family of function spaces 
  $(\mathcal D^{\alpha^2}_{\alpha, k} )_{\alpha \in (0,1]}$ and its respective fractal fractional derivative is given by
  \begin{align*}  \frac{ {d}^{\alpha^2 }}{ d z _{\alpha, k} }f (z)  = (1-\alpha^2)f (z) +   \alpha z^{\alpha -1 }\frac{ f   '(z) }{   e ( z^{\alpha})_{k -1}    },
 \quad \forall   f\in C^1(\mathbb D ,\mathbb C ),  
\end{align*}
for all $z\in \mathbb D$.  Note the  fact  $\alpha =0$  is allowed  
\begin{align*}  \frac{ {d}^{0 }}{ d z _{0, k} }f (z)  =  f (z), \quad \forall  f\in C^1(\mathbb D ,\mathbb C )    \end{align*} 
for all $z\in \mathbb D$,  although we probably do not have many of the results demonstrated above. In addition for $\alpha =1$ we see that  
\begin{align*}  \frac{ {d}^{1 }}{ d z _{1, k} }f (z)  =   \frac{ f   '(z) }{   e ( z )_{k -1}    },
 \quad \forall   f\in C^1(\mathbb D ,\mathbb C ),  
\end{align*}
for all $z\in \mathbb D$. 

Therefore, in case  $k=1$  we see that  the family of function spaces    $(\mathcal D^{\alpha^2 }_{\alpha, 1} )_{\alpha \in [0,1]}$ contains the Bergman space, $\mathcal D^{0 }_{0, 1} $, and the Dirichlet space, $\mathcal D^{1}_{1, 1}$, both associated to $\mathbb D\setminus(-1,0]$.
\end{rem}

\section{Quaternionic slice regular fractal-fractional Dirichlet type spaces}\label{Sec5}
\begin{defn} 
Given $\alpha \in (0,1]$, a quaternionic fractal measures is  given in terms of the quaternionic exponential truncated as follows: 
$q\mapsto e_k(q^{\alpha})$ for $k\in \mathbb N\cup\{0\}$ such that mapping $e_{k-1}(q^{\alpha}) \neq 0$ for all $q\in  \mathbb E := \mathbb B\setminus (-1,0]$, where $(-1,0]$ consists of $q$ such that  ${\bf q}$ is the vector zero and $ -1 < q_0 \leq 0 $.  

Consider the mappings $\chi_0, \chi_1\in C((0, 1) \times \mathbb E, \mathbb R)$ such that 
$$\lim_{\sigma\to 0^+} \chi_1(\sigma,q) = 1, \lim_{\sigma\to 0^+} \chi_0(\sigma,q) = 0, \lim_{\sigma\to 1^-} \chi_1(\sigma,q) = 0, \lim_{\sigma\to 1^-} \chi_0(\sigma,q) = 1,$$
for all $q\in   \mathbb E$. 
Given $\beta \in (0,1] $ and  ${\bf i}\in \mathbb S^2$ then the  quaternionic  generalized fractal-fractional  ${\bf i}$-derivative of $f\in \mathcal{SR}(\mathbb E)$, the zero set of $f\mid_{\mathbb E_{\bf i}}$ is the empty set, is given   as follows: 
\begin{align*}
& \frac{ {d}^{\sigma,\beta}}{ d q _{\alpha, k  , {\bf i} } } f\mid_{\mathbb E_{\bf i}} (z)\\
: = & \chi_1(\sigma, z)  f\mid_{\mathbb E_{\bf i}} (z) + \chi_0(\sigma, z) \lim_{\zeta\to z} \left(e _k( z^{\alpha}) - e _k( \zeta^{\alpha}) \right)^{-1} \left(f\mid_{\mathbb E_{\bf i}} ^{\beta} (z) - f\mid_{\mathbb E_{\bf i}} ^{\beta} (\zeta) \right),   
\end{align*}
for all $z\in\mathbb E_{\bf i}$.   
\end{defn} 
    
\begin{prop}
The quaternionic generalized fractal-fractional ${\bf i}$-derivative of $f\in \mathcal{SR}(\mathbb E)$ such that $f ^{\beta}\in  \mathcal{SR}(\mathbb E)$ and the zero set of $f\mid_{\mathbb E_{\bf i}}$ is the empty set  is given by  
\begin{align*}
& \frac{ {d}^{\sigma,\beta}}{ d q _{\alpha, k , {\bf i} } } f\mid_{\mathbb E_{\bf i}} (z)\\
= & \chi_1(\sigma, z) f\mid_{\mathbb E_{\bf i}} (z) + \chi_0(\sigma,z)(\alpha z^{\alpha-1} \ast e _k( z^{\alpha})')^{-\ast} \ast (f\mid_{\mathbb E_{\bf i}} ^{\beta})' (z), \quad \forall z\in\mathbb E_{\bf i},
\end{align*}
where $(f ^{\beta})'$ and $e _k( q^{\alpha})'$ are Cullen derivatives of the slice regular functions $f^{\beta}$ and $e _k( q^{\alpha})$, respectively. 
\end{prop}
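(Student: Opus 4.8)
The plan is to start from the limit that defines $\frac{{d}^{\sigma,\beta}}{dq_{\alpha,k,{\bf i}}}f\mid_{\mathbb E_{\bf i}}$ and to evaluate it explicitly, the only nontrivial term being
$$L(z):=\lim_{\zeta\to z}\left(e_k(z^{\alpha})-e_k(\zeta^{\alpha})\right)^{-1}\left(f\mid_{\mathbb E_{\bf i}}^{\beta}(z)-f\mid_{\mathbb E_{\bf i}}^{\beta}(\zeta)\right).$$
The first observation I would record is that $e_k(q^{\alpha})$ is intrinsic: on the principal branch $q^{\alpha}=e^{\alpha\ln q}$ satisfies $(\bar q)^{\alpha}=\overline{q^{\alpha}}$, and $e_k$ has real coefficients, so $e_k(q^{\alpha})\in\mathcal N(\mathbb E)$. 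Consequently $e_k(\cdot^{\alpha})\mid_{\mathbb E_{\bf i}}$ is $\mathbb C({\bf i})$-valued, the difference $e_k(z^{\alpha})-e_k(\zeta^{\alpha})$ lies in the commutative field $\mathbb C({\bf i})$ for $z,\zeta\in\mathbb E_{\bf i}$, and its inverse is simply the reciprocal taken inside $\mathbb C({\bf i})$; this is what makes the non-commutative quotient manageable. Here and below I abbreviate $f\mid_{\mathbb E_{\bf i}}^{\beta}$ by $f^{\beta}$, the restriction being understood.

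The second step is a factoring trick. Since $z-\zeta\in\mathbb C({\bf i})$ commutes with $e_k(z^{\alpha})-e_k(\zeta^{\alpha})\in\mathbb C({\bf i})$, I would insert $(z-\zeta)^{-1}(z-\zeta)$ and rewrite the quotient as
$$\left(e_k(z^{\alpha})-e_k(\zeta^{\alpha})\right)^{-1}\left(f^{\beta}(z)-f^{\beta}(\zeta)\right)=\left[\frac{e_k(z^{\alpha})-e_k(\zeta^{\alpha})}{z-\zeta}\right]^{-1}\left[\frac{f^{\beta}(z)-f^{\beta}(\zeta)}{z-\zeta}\right],$$
where the left bracket is a genuine $\mathbb C({\bf i})$-valued difference quotient and the right bracket is an $\mathbb H$-valued difference quotient with the \emph{complex} increment $z-\zeta$. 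The order of the two factors must be preserved here, since the right factor is only quaternion-valued.

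The third step is to pass to the limit. The left bracket converges to the complex derivative of the intrinsic function $e_k(\cdot^{\alpha})$ on the slice, which coincides with its Cullen derivative $e_k(z^{\alpha})'=\alpha z^{\alpha-1}e_{k-1}(z^{\alpha})$; the hypothesis $e_{k-1}(q^{\alpha})\neq0$ on $\mathbb E$ together with $z^{\alpha-1}\neq0$ (as $0\notin\mathbb E$) ensures this limit is invertible. For the right bracket I would invoke the Splitting Lemma (Lemma~\ref{LemS}) on $f^{\beta}\in\mathcal{SR}(\mathbb E)$: writing $f^{\beta}\mid_{\mathbb E_{\bf i}}=g_1+g_2{\bf j}$ with $g_1,g_2\in\textrm{Hol}(\mathbb E_{\bf i})$, the difference quotient tends to $g_1'(z)+g_2'(z){\bf j}=(f^{\beta})'(z)$, the Cullen derivative. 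Continuity of quaternionic multiplication then yields $L(z)=\left(e_k(z^{\alpha})'\right)^{-1}(f^{\beta})'(z)$.

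Finally I would recast $L(z)$ in $\ast$-language to display it as the restriction of a slice regular function. Since $e_k(q^{\alpha})'=\alpha q^{\alpha-1}e_{k-1}(q^{\alpha})$ is intrinsic, the properties of the $\ast$-product and $\ast$-inverse recalled in Section~\ref{Sec2} show that its slice-pointwise reciprocal equals its $\ast$-inverse and that left multiplication by it coincides with the $\ast$-product; hence $L(z)=\left(\alpha z^{\alpha-1}\ast e_k(z^{\alpha})'\right)^{-\ast}\ast(f^{\beta})'(z)$, which is the stated factor. Substituting $L$ into the definition gives the claimed identity. I expect the main obstacle to be the bookkeeping of the non-commutative order through the factoring and limiting steps, together with the justification — via intrinsicness of $e_k(q^{\alpha})$ and its Cullen derivative — that the pointwise complex operations performed on the slice genuinely coincide with the global $\ast$-operations on $\mathcal{SR}(\mathbb E)$, so that the right-hand side is well defined, zero-free, and slice regular.
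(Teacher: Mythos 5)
Your proposal is correct and takes essentially the same approach as the paper: the paper's proof likewise inserts the factor $(z-\zeta)^{-1}(z-\zeta)$, passes to the limit of the two difference quotients to obtain $(e_k(z^{\alpha})')^{-\ast}\ast(f\mid_{\mathbb E_{\bf i}}^{\beta})'(z)$, and concludes by noting that $q\mapsto(\alpha q^{\alpha-1}\ast e_k(q^{\alpha})')^{-\ast}$ is an intrinsic slice regular function. Your write-up merely supplies the justifications (commutativity inside $\mathbb C({\bf i})$, the Splitting Lemma for the quaternion-valued difference quotient) that the paper leaves implicit.
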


\begin{proof}
\begin{align*}
& \frac{{d}^{\sigma,\beta}}{dq_{\alpha, k}}_{\bf i}f\mid_{\mathbb E_{\bf i}} (z) = \chi_1(\sigma, z)  f\mid_{\mathbb E_{\bf i}} (z)  \\
& + \chi_0(\sigma, z)  \lim_{\zeta\to z} \left(e _k( z^{\alpha}) - e _k( \zeta^{\alpha}) \right)^{-1} (z-\zeta)^{-1}(z-\zeta) \left(f\mid_{\mathbb E_{\bf i}} ^{\beta}  (z) -  f\mid_{\mathbb E_{\bf i}} ^{\beta}  (\zeta) \right)  \\
= & \chi_1(\sigma, z)   f\mid_{\mathbb E_{\bf i}} (z)   +   \chi_0(\sigma,z)   ( e _k( z^{\alpha})' ) ^{-\ast}   \ast  (f\mid_{\mathbb E_{\bf i}} ^{\beta})' (z), \quad \forall z\in\mathbb E_{\bf i} \\
= & \chi_1(\sigma, z)   f\mid_{\mathbb E_{\bf i}} (z) + \chi_0(\sigma,z) (\alpha z^{\alpha-1} \ast e _k( z^{\alpha})')^{-\ast} \ast (f\mid_{\mathbb E_{\bf i}}^{\beta})' (z), \quad \forall z\in\mathbb E_{\bf i}.
\end{align*}
Finally, note that  the mapping  $ q\mapsto (\alpha q^{\alpha-1} \ast e _k( q^{\alpha})' ) ^{-\ast} $ for all $q\in \mathbb E$ is an   intrinsic slice regular function 
\end{proof}

\begin{rem}
According to the previous proposition and if the zero set of $f$ is the empty, then 
\begin{align*}  
\frac{{d}^{\sigma,\beta}}{dq_{\alpha, k, {\bf i}}} f(q) = & \chi_1(\sigma, q) f(q) + \chi_0(\sigma,q)  
  (\alpha q^{\alpha-1} \ast e _k( q^{\alpha})') ^{-\ast} \ast (f ^{\beta})' (q), 
\end{align*}  
for all $q\in \mathbb E$. 
\end{rem}
      
\begin{defn}
Let ${\bf i}\in \mathbb S^2$ then by $\mathfrak D^{\sigma,\beta}_{\alpha, k,{\bf i}}$ we denote the function set formed by   
$f\in \mathcal {SR}(\mathbb E)$ such that the zero set of $f\mid_{\mathbb E_{\bf i}}$ is the empty set and 
\begin{align*}  
\int_{\mathbb E_{\bf i}} \|\frac{{d}^{\sigma,\beta}}{dq_{\alpha, k, {\bf i}}} f \mid_{ \mathbb E_{\bf i}}(z)\|^2 d_z\mu_{\bf i} < \infty. 
\end{align*}

The function set $\mathfrak D^{\sigma,\beta}_{\alpha, k,{\bf i}}$ is  called ${\bf i}-\beta$ generalized fractal-fractional Dirichlet  type set of slice regular functions induced by $\frac{ {d}^{\sigma,\beta}}{ d q _{\alpha, k, {\bf i}}}$ and associated to $\mathbb E$. 
\end{defn}
\begin{rem}
There are several cases to consider on $\mathfrak D^{\sigma,\beta}_{\alpha, k,{\bf i}}$, however, we will restrict our attention to the case $\beta=1$ because $\dfrac{{d}^{\sigma,1}}{dq_{\alpha, k}}$ is a quaterinonic left-linear operator, $\mathfrak D^{\sigma, 1}_{\alpha, k,{\bf i}}$ is a quaternionic right-linear space and the condition on the zero sets of these function are not necessary.
So from now on, we shall write $\dfrac{ {d}^{\sigma}}{dq_{\alpha, k, {\bf i}}} = \dfrac{ {d}^{\sigma, 1}}{dq_{\alpha, k, {\bf i}}}$   and the function set $\mathfrak D^{\sigma }_{\alpha, k,{\bf i}}= \mathfrak  D^{\sigma,1}_{\alpha, k,{\bf i}}$ becomes at quaternionic right-module, which will be called ${\bf i}$-generalized  fractal-fractional Dirichlet type-space of slice regular functions associated to $\mathbb E$. 
\end{rem} 

\begin{defn} 
The quaternionic right-module $\mathfrak  D^{\sigma }_{\alpha, k,{\bf i}}$ is equipped with 
\begin{align*}  
\langle f,g \rangle_{\mathfrak D^{\sigma }_{\alpha, k,{\bf i}}}:= &  \alpha \overline{f( \frac{1}{2})} g(\frac{1}{2}) + \int_{\mathbb E_{\bf i}\setminus [0,1)}  \overline{ \left( \frac{ {d}^{\sigma}}{ d q _{\alpha, k, {\bf i}}} f\mid_{\mathbb E_{\bf i}}(z) \right)}   
\left(\frac{ {d}^{\sigma}}{dz_{\alpha, k, {\bf i}}} \mid_{\mathbb E_{\bf i}} (z) \right) d_z\mu,
\end{align*}
and 
\begin{align*} 
\|f\|_{\mathfrak  D^{\sigma }_{\alpha, k, {\bf i} } }^2  := & \alpha \|f(\frac{1}{2})\|^2 + \int_{\mathbb E_{\bf i}}\| \frac{ {d}^{\sigma}}{d z _{\alpha, k, {\bf i}} } f\mid_{\mathbb E_{\bf i}}(z) \|^2 d_z\mu,
\end{align*}
for all $f,g\in \mathfrak D^{\sigma }_{\alpha, k, {\bf i}}$. 
\end{defn}

Similarly to Section \ref{Sec2} assume that $\chi_0(\sigma,q) = \sigma$ and $\chi_1(\sigma,q) = 1-\sigma$ for all $q\in \mathbb E$ with $\sigma\in (0,1)$. Then

\begin{prop}\label{SRpro1} If $f\in \mathcal{SR}(\mathbb E)$ then 
\begin{align*}
\left(\textrm{exp}[\dfrac{\sigma-1}{\sigma}  e_k (q^{\alpha})] \ast f(q) \right)'  
= & \frac{1}{\sigma} \textrm{exp}[\dfrac{\sigma-1}{\sigma} e_k (q^{\alpha})] \ast( e_k (q^{\alpha})')^{-\ast} \ast \frac{ {d}^{\sigma}}{ d q _{\alpha, k, {\bf i}}} f(q), 
\end{align*}
 for all $q\in \mathbb E$.
\end{prop}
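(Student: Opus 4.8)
The approach I would take is to verify the identity one slice at a time and there reduce it to the complex statement already proved in Proposition \ref{pro1}. The mechanism that makes this work is that $e_k(q^{\alpha})$ is an \emph{intrinsic} slice regular function on $\mathbb E$ (it is $\textrm{exp}$ of a real-coefficient series in the intrinsic function $q^{\alpha}$), and consequently so are $\textrm{exp}[\frac{\sigma-1}{\sigma}e_k(q^{\alpha})]$, the Cullen derivative $e_k(q^{\alpha})'$, and its $\ast$-inverse. Intrinsic functions are $\ast$-central: if $g\in\mathcal N(\mathbb E)$ then $g\ast h=h\ast g$ for every $h\in\mathcal{SR}(\mathbb E)$, and in fact $g\ast h=g\cdot h$ pointwise, because $g(q)\in\mathbb C(I_q)$ commutes with $q$, so the argument twist in $g\ast h(q)=g(q)h(g(q)^{-1}qg(q))$ disappears. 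I would record this fact first, since it converts every $\ast$-product in the statement into an ordinary product once a slice is fixed.

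Next I would fix $\mathbf i\in\mathbb S^2$ and apply the Splitting Lemma (Lemma \ref{LemS}) to write $f\mid_{\mathbb E_{\mathbf i}}=f_1+f_2\mathbf j$ with $f_1,f_2\in\textrm{Hol}(\mathbb E_{\mathbf i})$ and $\mathbf j\perp\mathbf i$. By intrinsicality the factors $\textrm{exp}[\frac{\sigma-1}{\sigma}e_k(q^{\alpha})]$ and $e_k(q^{\alpha})'$ restrict to $\mathbb C(\mathbf i)$-valued holomorphic functions $\phi,\varepsilon$ on $\mathbb E_{\mathbf i}\cong\mathbb D\setminus(-1,0]$, and the Cullen derivative of a slice regular function restricts to the ordinary $\mathbb C(\mathbf i)$-derivative of its restriction. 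Hence the left-hand side restricts to $(\phi f_1)'+(\phi f_2)'\mathbf j$, while, by the explicit formula for $\frac{{d}^{\sigma}}{dq_{\alpha,k,\mathbf i}}$ derived above, the fractal-fractional $\mathbf i$-derivative restricts to $\frac{{d}^{\sigma}}{dz_{\alpha,k}}f_1+\big(\frac{{d}^{\sigma}}{dz_{\alpha,k}}f_2\big)\mathbf j$, where $\frac{{d}^{\sigma}}{dz_{\alpha,k}}$ is exactly the complex operator of Section \ref{Sec4}. The claimed identity then splits into two copies, one for $f_1$ and one for $f_2$, of the scalar identity
\[
(\phi\,g)'=\tfrac1\sigma\,\phi\,\varepsilon\;\tfrac{{d}^{\sigma}}{dz_{\alpha,k}}g,
\]
which is precisely Proposition \ref{pro1}. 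Applying that proposition to $g=f_1$ and $g=f_2$ and recombining through the splitting $f_1+f_2\mathbf j$ yields the stated identity on $\mathbb E_{\mathbf i}$; since $\mathbf i$ was arbitrary (equivalently, both sides are slice regular and agree on every slice, so they coincide on $\mathbb E$ by the Representation Formula, Theorem \ref{TeoR}), the identity holds on all of $\mathbb E$.

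The main obstacle is not the computation but the noncommutative bookkeeping of the $\ast$-product: a priori one may not move $\textrm{exp}[\frac{\sigma-1}{\sigma}e_k(q^{\alpha})]$ and $e_k(q^{\alpha})'$ past $f$ or past one another. This is exactly what the centrality of intrinsic functions licenses, so the first step, proving $g\ast h=g\cdot h$ for intrinsic $g$, is what makes the slice-by-slice reduction to Proposition \ref{pro1} legitimate; without it the componentwise splitting into two independent scalar identities would not be available. The remaining points needing care are the Leibniz rule $(g\ast f)'=g'\ast f+g\ast f'$ for the Cullen derivative of a $\ast$-product and the chain rule $(\textrm{exp}[g])'=g'\ast\textrm{exp}[g]$ for intrinsic $g$, both of which underlie the identification of the restricted left-hand side with $(\phi f_{\ell})'$, together with the standing hypothesis $e_{k-1}(q^{\alpha})\neq0$ on $\mathbb E$, which guarantees that $e_k(q^{\alpha})'$ is nowhere vanishing and hence $\ast$-invertible, so that $(e_k(q^{\alpha})')^{-\ast}$ in the statement is well defined.
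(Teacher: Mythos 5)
Your proposal is correct and takes essentially the same route as the paper: the paper's entire proof is the observation that $q\mapsto \textrm{exp}[\frac{\sigma-1}{\sigma}e_k(q^{\alpha})]$ is intrinsic followed by ``direct computations,'' and your slicewise reduction via the Splitting Lemma (Lemma \ref{LemS}) to Proposition \ref{pro1}, using the $\ast$-centrality and pointwise-product behaviour of intrinsic factors, is exactly how those computations are meant to be carried out (compare Remark \ref{rem3}). One caveat worth recording: your reduction lands on the scalar identity with the factor $\frac{d}{dz}e_k(z^{\alpha})$ \emph{not} inverted, in agreement with Proposition \ref{pro1}, so the $\left(e_k(q^{\alpha})'\right)^{-\ast}$ appearing in the statement is evidently a misprint that your argument silently corrects rather than proves as literally written.
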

\begin{proof} Note that the mapping $q\mapsto \textrm{exp}[\dfrac{\sigma-1}{\sigma} e_k (q^{\alpha})]$ is an intrinsic slice regular function and the proof follows from direct computations.
\end{proof}

\begin{rem}\label{rem3}
Given  $f \in \mathfrak  D^{\sigma }_{\alpha, k,{\bf i}}$ by Lemma \ref{LemS} there exist $f_1,f_2 \in \mathcal {SR}(\mathbb E)$ such that  $f\mid_{\mathbb E_{\bf i}}(z) = f_1(z) + f_2 (z) {\bf j} $, where ${\bf j}\in\mathbb S^2$ and  
\begin{align}\label{DirichletEsp}
\dfrac{ {d}^{\sigma}}{ d q _{\alpha, k, {\bf i}} } f\mid_{\mathbb E_{\bf i}} (z)  = &  \dfrac{ {d}^{\sigma }}{ d z _{\alpha, k} } f_1(z) +  \dfrac{ {d}^{\sigma }}{ d z _{\alpha, k} } f_2(z)  {\bf j}  , \quad \forall z\in \mathbb E_{\bf i} , \nonumber \\
 \|f\|_{\mathfrak  D^{\sigma }_{\alpha, k, {\bf i} } }^2  = & \|f_1\|_{\mathcal D^{\sigma }_{\alpha, k}}^2    +  \|f_2\|_{\mathcal D^{\sigma }_{\alpha, k}}^2, 
\end{align}
i.e.,  $f_1,f_2 \in \mathcal D^{\sigma }_{\alpha, k}$ in the complex plane $\mathbb C({\bf i})$. In addition, using Theorem \ref{TeoR}   we conclude that  
  $f \in \mathfrak  D^{\sigma }_{\alpha, k,{\bf i} } $  iff   $f_1,f_2 \in \mathcal D^{\sigma }_{\alpha, k}$ in the complex plane $\mathbb C({\bf i})$ and 
  $f = P_{ {\bf  i} }(f_1+ f_2{\bf j}) $ on $\mathbb E$.
\end{rem}

\begin{prop} 
The quaternionic right-module $(\mathfrak D^{\sigma}_{\alpha, k, {\bf i}}, \|\cdot \|_{\mathfrak D^{\sigma}_{\alpha, k, { \bf i}}})$ is a normed quaternionic right-module . 
\end{prop}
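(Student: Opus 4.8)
The plan is to reduce every assertion to the already-established complex statement, Proposition \ref{pro2}, by means of the splitting recorded in Remark \ref{rem3}. Fix ${\bf i}\in\mathbb S^2$ and a ${\bf j}\in\mathbb S^2$ orthogonal to ${\bf i}$. By Remark \ref{rem3}, each $f\in\mathfrak D^{\sigma}_{\alpha,k,{\bf i}}$ is determined by its splitting $f\mid_{\mathbb E_{\bf i}}=f_1+f_2{\bf j}$ with $f_1,f_2\in\mathcal D^{\sigma}_{\alpha,k}$ (in the plane $\mathbb C({\bf i})$), and the norm is the $\ell^2$-combination $\|f\|^2_{\mathfrak D^{\sigma}_{\alpha,k,{\bf i}}}=\|f_1\|^2_{\mathcal D^{\sigma}_{\alpha,k}}+\|f_2\|^2_{\mathcal D^{\sigma}_{\alpha,k}}$. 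Thus, equipped with this norm, $\mathfrak D^{\sigma}_{\alpha,k,{\bf i}}$ is isometric to the orthogonal direct sum $\mathcal D^{\sigma}_{\alpha,k}\oplus\mathcal D^{\sigma}_{\alpha,k}$, and it suffices to transport the three norm axioms across this identification while tracking the quaternionic right action.

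First I would confirm that the set is genuinely a right-module, i.e.\ closed under addition and right multiplication by quaternions. Addition is immediate from the componentwise splitting and the complex-linearity of $\mathcal D^{\sigma}_{\alpha,k}$. For the right action, write $a=a_1+a_2{\bf j}$ with $a_1,a_2\in\mathbb C({\bf i})$ and use ${\bf j}w=\overline{w}{\bf j}$ for $w\in\mathbb C({\bf i})$ to obtain
\begin{align*}
(f_1+f_2{\bf j})a=(f_1 a_1-f_2\overline{a_2})+(f_1 a_2+f_2\overline{a_1}){\bf j}.
\end{align*}
Both new components are $\mathbb C({\bf i})$-linear combinations of $f_1,f_2$, hence lie in $\mathcal D^{\sigma}_{\alpha,k}$ because the latter is a complex linear space (Proposition \ref{pro2}); consequently $fa\in\mathfrak D^{\sigma}_{\alpha,k,{\bf i}}$.

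Next come the norm axioms. Nonnegativity is clear, and definiteness follows from the complex case: $\|f\|=0$ forces $\|f_1\|=\|f_2\|=0$, whence $f_1=f_2=0$ by Proposition \ref{pro2}, and then $f=P_{\bf i}(f_1+f_2{\bf j})\equiv 0$ on $\mathbb E$ by the Representation Formula (Theorem \ref{TeoR}). The triangle inequality reduces, via the componentwise complex triangle inequalities together with the Minkowski inequality in $\mathbb R^2$, to $\|f+g\|=\sqrt{\|f_1+g_1\|^2+\|f_2+g_2\|^2}\le\|f\|+\|g\|$.

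The main obstacle is homogeneity under a non-real quaternionic scalar, where non-commutativity enters. Using the recombination above and the $\mathbb C({\bf i})$-linearity of the operator $\frac{ {d}^{\sigma}}{ d z _{\alpha, k} }$, I would expand $\|fa\|^2=\|f_1 a_1-f_2\overline{a_2}\|^2_{\mathcal D}+\|f_1 a_2+f_2\overline{a_1}\|^2_{\mathcal D}$ with the complex inner product on $\mathcal D^{\sigma}_{\alpha,k}$. The crucial point is that the two cross terms $-2\Re(a_1 a_2\langle f_1,f_2\rangle_{\mathcal D})$ and $+2\Re(a_1 a_2\langle f_1,f_2\rangle_{\mathcal D})$ produced by the two summands have opposite signs and cancel, leaving $\|fa\|^2=(|a_1|^2+|a_2|^2)(\|f_1\|^2+\|f_2\|^2)=|a|^2\|f\|^2$; equivalently, the $2\times2$ matrix implementing the right action on $(f_1,f_2)$ is $|a|$ times an isometry, so it scales the $\ell^2$-norm by $|a|$. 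An alternative and cleaner derivation invokes the sesquilinearity of the quaternion-valued form $\langle\cdot,\cdot\rangle_{\mathfrak D^{\sigma}_{\alpha,k,{\bf i}}}$: since $\frac{ {d}^{\sigma}}{ d q _{\alpha, k, {\bf i}}}$ is right-linear on constants (its nontrivial part is a $\ast$-product by an intrinsic function, and $\ast$-multiplication is right-linear over constant quaternions), one gets $\|fa\|^2=\overline{a}\,\|f\|^2\,a=|a|^2\|f\|^2$ because $\|f\|^2$ is a nonnegative real number. Either way, the remaining verification is routine once the cross-term cancellation is in hand.
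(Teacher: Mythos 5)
Your proof is correct and follows essentially the same route as the paper: both reduce the statement to Proposition \ref{pro2} via the splitting $f\mid_{\mathbb E_{\bf i}}=f_1+f_2{\bf j}$ and the norm identity of Remark \ref{rem3}, using the complex case for definiteness. The only difference is that you spell out the homogeneity computation (the cross-term cancellation under right multiplication by $a=a_1+a_2{\bf j}$) that the paper dismisses as ``direct computations,'' and your details check out.
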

\begin{proof} 
Let $f\in \mathfrak  D^{\sigma }_{\alpha, k,{\bf i}}$ such that $\|f\|_{\mathfrak  D^{\sigma }_{\alpha, k, {\bf i}}} = 0$. By the previous remark there exist $f_1,f_2 \in \mathcal D^{\sigma }_{\alpha, k}$ in the complex plane $\mathbb C({\bf i})$ such that 
$f = P_{ {\bf  i} }(f_1+ f_2{\bf j}) $ on $\mathbb E$ and 
$$\|f\|_{\mathfrak  D^{\sigma }_{\alpha, k, {\bf i}}}^2 = \|f_1\|_{\mathcal D^{\sigma }_{\alpha, k}}^2 + \|f_2\|_{\mathcal D^{\sigma }_{\alpha, k}}^2.$$
Therefore,  $\|f_1\|_{\mathcal D^{\sigma }_{\alpha, k}} = \|f_2\|_{\mathcal D^{\sigma }_{\alpha, k}}=0$ and from Proposition \ref{pro2} we see that $f_1\equiv f_2\equiv  0$. Then $f\equiv 0$.   

From direct computations we obtain  that  
\begin{align*}
\|f \|_{\mathfrak  D^{\sigma }_{\alpha, k, {\bf i}} }  \geq  & 0 , \\ 
\|  f a  \|_{\mathfrak  D^{\sigma }_{\alpha, k, {\bf i}  } } = &  \|f \|_{\mathfrak  D^{\sigma }_{\alpha, k,{\bf i}} } |a| \\
\|f+g \|_{\mathfrak  D^{\sigma }_{\alpha, k, {\bf i}} }  \leq & \|f \|_{\mathfrak  D^{\sigma }_{\alpha, k, {\bf i} } }   + \|g \|_{\mathfrak  D^{\sigma }_{\alpha, k, {\bf i}} }  ,  
\end{align*}
for all $f,g \in  \mathfrak  D^{\sigma }_{\alpha, k, {\bf i}}$ and $a\in \mathbb H$.
\end{proof}
 
\begin{prop}   
$(\mathfrak  D^{\sigma }_{\alpha, k, {\bf i}} , \|\cdot \|_{\mathfrak D^{\sigma}_{\alpha, k, {\bf i}}})$ is  a  quaternionic Banach right-module.  
\end{prop}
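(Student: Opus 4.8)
The plan is to reduce completeness to the already established completeness of the complex space $\mathcal D^{\sigma}_{\alpha, k}$ from Proposition \ref{pro3}, transferring it through the isometric decomposition recorded in Remark \ref{rem3}. Fix ${\bf j}\in\mathbb S^2$ orthogonal to ${\bf i}$. First I would take an arbitrary Cauchy sequence $(f_n)$ in $(\mathfrak D^{\sigma}_{\alpha, k, {\bf i}}, \|\cdot\|_{\mathfrak D^{\sigma}_{\alpha, k, {\bf i}}})$ and, for each $n$, apply the Splitting Lemma \ref{LemS} to write $f_n\mid_{\mathbb E_{\bf i}} = f_{n,1} + f_{n,2}{\bf j}$ with $f_{n,1}, f_{n,2}\in Hol(\mathbb E_{\bf i})$, which by Remark \ref{rem3} lie in $\mathcal D^{\sigma}_{\alpha, k}$.

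The key step is the norm-splitting identity \eqref{DirichletEsp}, which applied to differences gives
\begin{align*}
\|f_n - f_m\|_{\mathfrak D^{\sigma}_{\alpha, k, {\bf i}}}^2 = \|f_{n,1} - f_{m,1}\|_{\mathcal D^{\sigma}_{\alpha, k}}^2 + \|f_{n,2} - f_{m,2}\|_{\mathcal D^{\sigma}_{\alpha, k}}^2,
\end{align*}
since the decomposition in Remark \ref{rem3} is $\mathbb C({\bf i})$-linear in $f$. This immediately forces both $(f_{n,1})$ and $(f_{n,2})$ to be Cauchy sequences in $\mathcal D^{\sigma}_{\alpha, k}$. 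Invoking Proposition \ref{pro3}, each one converges: $f_{n,1}\to g_1$ and $f_{n,2}\to g_2$ in $\mathcal D^{\sigma}_{\alpha, k}$. I would then define the candidate limit $f := P_{\bf i}(g_1 + g_2{\bf j})$ on $\mathbb E$. By the characterization in Remark \ref{rem3}, since $g_1, g_2\in \mathcal D^{\sigma}_{\alpha, k}$ in the plane $\mathbb C({\bf i})$, this $f$ belongs to $\mathfrak D^{\sigma}_{\alpha, k, {\bf i}}$ and satisfies $f\mid_{\mathbb E_{\bf i}} = g_1 + g_2{\bf j}$.

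Finally I would verify convergence in the module norm: applying \eqref{DirichletEsp} once more, this time to $f_n - f$, yields
\begin{align*}
\|f_n - f\|_{\mathfrak D^{\sigma}_{\alpha, k, {\bf i}}}^2 = \|f_{n,1} - g_1\|_{\mathcal D^{\sigma}_{\alpha, k}}^2 + \|f_{n,2} - g_2\|_{\mathcal D^{\sigma}_{\alpha, k}}^2 \longrightarrow 0,
\end{align*}
so $(f_n)$ converges to $f$ in $\mathfrak D^{\sigma}_{\alpha, k, {\bf i}}$, establishing completeness. The main obstacle is not analytic but organizational: one must apply the Splitting Lemma with a single fixed ${\bf j}$ for the whole sequence so that the component-wise decomposition is coherent, and one must check that the reconstruction $f = P_{\bf i}(g_1 + g_2{\bf j})$ via the Representation Formula (Theorem \ref{TeoR}) genuinely produces a slice regular function on all of $\mathbb E$ whose ${\bf i}$-restriction has the prescribed splitting. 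Both of these facts are precisely what Remark \ref{rem3} records, so the argument amounts to transferring the completeness of $\mathcal D^{\sigma}_{\alpha, k}$ across the isometry $f\mapsto(f_1,f_2)$.
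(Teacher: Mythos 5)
Your argument is correct and coincides with the paper's own proof: both split a Cauchy sequence via Remark \ref{rem3} into two component Cauchy sequences in $\mathcal D^{\sigma}_{\alpha,k}$, invoke the completeness established in Proposition \ref{pro3}, and reassemble the limit through $P_{\bf i}$. Your explicit final verification that $\|f_n-f\|_{\mathfrak D^{\sigma}_{\alpha,k,{\bf i}}}\to 0$ via the norm-splitting identity is a welcome detail the paper leaves implicit.
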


\begin{proof}
If $(f_n)_{\mathbb N}$ is a Cauchy sequence of elements of  $ \mathfrak  D^{\sigma }_{\alpha, k, {\bf i}}$ then from  Remark \ref{rem3} there exists 
  $f_{1,n}, f_{2,n}   \in \mathcal D^{\sigma }_{\alpha, k}$ in the complex plane $\mathbb C({\bf i})$ for all $n\in \mathbb N$ such that 
  $f_n = P_{ {\bf  i}}(f_{1,n}+ f_{2,n}{\bf j}) $ on $\mathbb E$, where ${\bf j} \in \mathbb S^2$ such that ${\bf j}\perp {\bf i}$ and the identity   
\begin{align*} 
 \|f_n - f_m \|_{\mathfrak  D^{\sigma }_{\alpha, k, {\bf i} } }^2  = & \|f_{1,n} - f_{1,m}\|_{\mathcal D^{\sigma }_{\alpha, k}}^2 
    +  \|f_{2,n} - f_{2,m}\|_{\mathcal D^{\sigma }_{\alpha, k}}^2, 
\end{align*}
for all $n,m\in \mathbb N$ shows that  $(f_{1,n})_{n\in \mathbb N}$,  $ (f_{2,n})_{n\in \mathbb N}$   are Cauchy sequences in
 $ \mathfrak  D^{\sigma }_{\alpha, k } $. Using  Proposition 
\ref{pro3}  and Remark \ref{rem3} there exist $ f_ 1  ,  \ f_{2}\in  \mathfrak  D^{\sigma }_{\alpha, k } $ such that    
$(f_n)_{n\in \mathbb N}$ converges   to $f = P_{ {\bf  i} }(f_1+ f_2{\bf j}) \in  \mathfrak  D^{\sigma }_{\alpha, k, {\bf i}}$.
\end{proof}

\begin{rem}
It is clearly that $\langle \cdot ,\cdot  \rangle_{\mathfrak  D^{\sigma }_{\alpha, k, {\bf i}}}$ is a quaternionic  inner product in  $\mathfrak  D^{\sigma }_{\alpha, k, {\bf i}}$. Therefore 
$$\left( \mathfrak  D^{\sigma }_{\alpha, k, {\bf i}}, \langle \cdot ,\cdot \rangle_{\mathfrak D^{\sigma }_{\alpha, k, {\bf i}}}\right)$$ is a quaternionic Hilbert right-module.
\end{rem}

\begin{prop}
Let $f \in \mathcal {SR}(\mathbb B) \cap \mathfrak  D^{\sigma }_{\alpha, k, {\bf i}}$ and assume there exists a sequence of quaternions $(a_n)_{n\geq 0}$ such that  
$$f(q)= \sum_{n=0}^\infty q^n a_n ,\quad \forall q\in \mathbb B,$$
then 
\begin{align*}  
& \|f\|_{\mathfrak  D^{\sigma }_{\alpha, k, {\bf i} } }^2 = \alpha   |f (\frac{1}{2})|^2 + (1-\sigma)^2 \pi \sum_{n=0}^\infty  
\frac{1}{n+1} | a_{ n}|^2   \\
  &  +  \frac{\sigma^2 }{2\alpha^2 }  \sum_{n,m=0}^\infty  (n+1)(m+1)\left(a_{n+1} \bar a_{m+1}                                    
		-  {\bf i }a_{n+1} \bar a_{m+1} {\bf i}\right)  \alpha_{m,n}
    \\ 
&  + \frac{  (1-\sigma) \sigma }{2\alpha} \sum_{n,m=0}^\infty 2 (m+1)  Re \left[  
 \left(a_{m+1} \bar a_n -  {\bf i }a_{m+1} \bar a_n {\bf i} \right) \beta_{n,m} \right],   
\end{align*}
where  
 \begin{align*}
 \alpha_{m,n}=  &    \lim_{
 {   \begin{array}{c} 
  \rho \to 1^-   \\ 
  \theta_1\to -\pi^+\\
  \theta_2 \to  \pi^- \end{array}}}    
\int_{0}^\rho \int_{\theta_1}^{\theta_2}  
\frac{r ^{n+m+3 -2\alpha } e^{i \theta (n-m)}} {| e_{k-1} (r^{\alpha} e^{ i \alpha \theta}) |^2} dr d\theta  \\
\beta_{m,n} = & 
 \lim_{
 {\begin{array}{c} 
  \rho \to 1^-   \\ 
  \theta_1\to -\pi^+ \\
  \theta_2 \to  \pi^- \end{array}}}    
\int_{0}^\rho \int_{\theta_1}^{\theta_2} \frac{ r^{n+m+ 2-\alpha }  e^{i\theta ( m +1 -n - \alpha)}}{e_{k-1}(r^{\alpha} e^{i \theta \alpha})} dr d\theta
\end{align*}
\end{prop}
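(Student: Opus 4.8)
The plan is to reduce the quaternionic norm to two copies of the complex norm computed in Proposition \ref{pro6}, and then repackage the resulting scalar sums into the quaternionic expressions appearing in the statement. First I would invoke the Splitting Lemma (Lemma \ref{LemS}) to write $f\mid_{\mathbb E_{\bf i}} = f_1 + f_2 {\bf j}$ with $f_1, f_2$ holomorphic on $\mathbb E_{\bf i}$ and ${\bf j}\perp{\bf i}$; since $f\in\mathcal{SR}(\mathbb B)$ is regular on the whole ball, $f_1$ and $f_2$ extend holomorphically across $(-1,0]$, so under the identification of $\mathbb B\cap\mathbb C({\bf i})$ with $\mathbb D$ they are genuine elements of $Hol(\mathbb D)$. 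By Remark \ref{rem3} we then have $f_1, f_2 \in \mathcal D^{\sigma}_{\alpha, k}$ and
\[
\|f\|^2_{\mathfrak D^{\sigma}_{\alpha, k, {\bf i}}} = \|f_1\|^2_{\mathcal D^{\sigma}_{\alpha, k}} + \|f_2\|^2_{\mathcal D^{\sigma}_{\alpha, k}},
\]
which is the structural reduction that makes the whole computation complex-analytic.

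Next I would split the coefficients. Writing $a_n = a_n^{(1)} + a_n^{(2)}{\bf j}$ with $a_n^{(1)}, a_n^{(2)} \in \mathbb C({\bf i})$ and restricting $f(q)=\sum_n q^n a_n$ to $q=z\in\mathbb C({\bf i})$, the fact that $z^n a_n^{(2)}\in\mathbb C({\bf i})$ yields the two holomorphic expansions $f_1(z) = \sum_n z^n a_n^{(1)}$ and $f_2(z)=\sum_n z^n a_n^{(2)}$ on $\mathbb D$. Applying Proposition \ref{pro6} to each of $f_1, f_2$ and adding, the constant terms combine through $|f(\tfrac12)|^2 = |f_1(\tfrac12)|^2 + |f_2(\tfrac12)|^2$ and the diagonal terms combine through $|a_n|^2 = |a_n^{(1)}|^2 + |a_n^{(2)}|^2$, reproducing the $\alpha|f(\tfrac12)|^2$ and $(1-\sigma)^2\pi$ summands of the claimed formula verbatim.

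The crux is the algebraic identity that folds the two remaining scalar double sums into the quaternionic ones. For $a = a^{(1)} + a^{(2)}{\bf j}$ and $b = b^{(1)} + b^{(2)}{\bf j}$ with all parts in $\mathbb C({\bf i})$, I would establish
\[
a^{(1)}\overline{b^{(1)}} + a^{(2)}\overline{b^{(2)}} = \tfrac12\left(a\bar b - {\bf i}\,a\bar b\,{\bf i}\right).
\]
This follows by computing, for a general quaternion $q$, that ${\bf i}q{\bf i}$ flips the sign of the $\mathbb C({\bf i})$-part while fixing the $\mathbb C({\bf i}){\bf j}$-part, so $\tfrac12(q - {\bf i}q{\bf i})$ is exactly the projection of $q$ onto $\mathbb C({\bf i})$; combining this with the expansion of $a\bar b$, obtained using $\bar b = \overline{b^{(1)}} - b^{(2)}{\bf j}$ together with the anticommutation rule ${\bf j}w = \bar w{\bf j}$ for $w\in\mathbb C({\bf i})$, shows that the $\mathbb C({\bf i})$-part of $a\bar b$ equals $a^{(1)}\overline{b^{(1)}} + a^{(2)}\overline{b^{(2)}}$. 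Applying this with $(a,b)=(a_{n+1},a_{m+1})$ converts $a_{n+1}^{(1)}\overline{a_{m+1}^{(1)}}+a_{n+1}^{(2)}\overline{a_{m+1}^{(2)}}$ into $\tfrac12(a_{n+1}\bar a_{m+1} - {\bf i}a_{n+1}\bar a_{m+1}{\bf i})$, producing the $\tfrac{\sigma^2}{2\alpha^2}$ term with kernel $\alpha_{m,n}$; applying it with $(a,b)=(a_{m+1},a_n)$ does the same for the cross term, where taking $Re$ of the resulting element of $\mathbb C({\bf i})$ against $\beta_{n,m}$ supplies the $\tfrac{(1-\sigma)\sigma}{2\alpha}$ term (the halving of the prefactors being precisely the $\tfrac12$ of the identity).

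I expect the main obstacle to be the careful quaternionic bookkeeping in this last identity, in particular tracking the conjugations induced by ${\bf j}w = \bar w{\bf j}$ and verifying the signs in ${\bf i}q{\bf i}$, since every other step is either a direct citation of Proposition \ref{pro6} and Remark \ref{rem3} or the elementary orthogonal decomposition of the quaternionic modulus.
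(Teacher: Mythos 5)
Your proposal is correct and follows essentially the same route as the paper: split $f\mid_{\mathbb E_{\bf i}}=f_1+f_2{\bf j}$ and $a_n=a_{1,n}+a_{2,n}{\bf j}$, apply Proposition \ref{pro6} to each component via $\|f\|^2_{\mathfrak D^{\sigma}_{\alpha,k,{\bf i}}}=\|f_1\|^2_{\mathcal D^{\sigma}_{\alpha,k}}+\|f_2\|^2_{\mathcal D^{\sigma}_{\alpha,k}}$, and fold the sums back together with the identity $a_{1,n}\bar a_{1,m}+a_{2,n}\bar a_{2,m}=\tfrac12\left(a_n\bar a_m-{\bf i}\,a_n\bar a_m\,{\bf i}\right)$, which the paper derives by expanding the projections $\tfrac12(a_n\mp{\bf i}a_n{\bf i})$ and you derive equivalently by projecting the product $a\bar b$ onto its $\mathbb C({\bf i})$-part.
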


\begin{proof}
Given ${\bf j}\in \mathbb S^2$ and ${\bf j}\perp {\bf i}$ there exist $a_{k,n}\in \mathbb C({\bf i})$ for all $n\geq 0$ and $k=1,2$ such that $a_n = a_{1,n}+ a_{2,n}{\bf j}$. Then $f_k(z) = \sum_{n=0}^{\infty} z^n a_{k,n}$ for all $z\in \mathbb E_{\bf i}$ and $k=1,2$ such that $f\mid_{\mathbb E_{\bf i}} = f_1 + f_2{\bf j}$ on $\mathbb E_{\bf i}$. 

In addition, $f_{1}, f_{2} \in \mathcal D^{\sigma }_{\alpha, k}$  and
\begin{align*}
 \|f\|_{\mathfrak  D^{\sigma }_{\alpha, k, {\bf i} } }^2  = & \|f_1\|_{\mathcal D^{\sigma }_{\alpha, k}}^2    +  \|f_2\|_{\mathcal D^{\sigma }_{\alpha, k}}^2 . 
 \end{align*}   
To finish the proof we use Proposition \ref{pro6} and the identities:
  $$  {\bf i }a_n {\bf i}= - a_{1,n}+ a_{2,n} {\bf j} ,  \quad
 \dfrac{1}{2}\left(  a_n -  {\bf i }a_n {\bf i}\right)  =   a_{1,n}    , \quad  \dfrac{1}{2}\left(  a_n +   {\bf i }a_n {\bf i}\right)  =   a_{2,n} \quad \forall  n\geq  0,$$
to obtain that 
 \begin{align*}
 & a_{1, n } \bar a_{1, m } + a_{2, n } \bar a_{2, m } \\
 = & 
 \dfrac{1}{2}\left(  a_n -  {\bf i }a_n {\bf i}\right)  \dfrac{1}{2}\left( \bar a_m -  {\bf i } \bar  a_m {\bf i}\right)  + 
 \dfrac{1}{2}\left(  a_n +  {\bf i }a_n {\bf i}\right)  \dfrac{1}{2}\left( \bar a_m + {\bf i } \bar a_m {\bf i}\right)  \\
 = &  
 \dfrac{1}{4}\left[a_n \bar a_m -  {\bf i }a_n {\bf i} \bar a_m - a_n {\bf i } \bar a_m {\bf i} - {\bf i }a_n \bar a_m {\bf i}    + 
    a_n \bar a_m +  {\bf i }a_n {\bf i} \bar a_m  +  a_n  {\bf i } \bar a_m {\bf i}  - {\bf i }a_n \bar a_m {\bf i}   \right]  \\
    = &  \dfrac{1}{2}\left(a_n \bar a_m  -  {\bf i }a_n \bar a_m {\bf i}     \right)  , \quad \forall \  n,m\geq 0.
\end{align*} 
\end{proof}

\begin{cor}
Let $f \in \mathcal {SR}(\mathbb B)$ and a sequence of quaternions $(a_n)_{n\geq 0}$ such that  
 $$f(q)= \sum_{n=0}^\infty q^n a_n ,\quad \forall q\in \mathbb B.$$
Then for any $f\in  \mathfrak  D^{\sigma }_{\alpha, 1, {\bf i}}$ we have  
 \begin{align*}  & \|  f \|^2_{\mathfrak  D^{\sigma }_{\alpha, 1, {\bf i}}} =  \alpha |f(\frac{1}{2})|^2 + 
 (1-\sigma)^2 \pi \sum_{n=0}^\infty  \frac{1}{n+1} | a_{n}|^2     
 + \frac{\sigma^2 }{\alpha^2} \sum_{n=0}^\infty  \frac{(n+1)^2}{2n +4-2\alpha} |a_{ n+1}| ^2  \\
 & + \frac{(1-\sigma) \sigma }{ \alpha} \sum_{n,m=0}^\infty 
 \frac{(m+1) \Im  \left[ ( e^{i2\pi (m-n+1 -\alpha)}-1 ) \left(a_{m+1} \bar a_n - {\bf i }a_{m+1} \bar a_n {\bf i} \right) \right]}{ (n+m+3-\alpha) (m-n+1-\alpha )}.    
\end{align*}
On the other hand, if $f\in  \mathfrak  D^{\sigma }_{\alpha, \infty, {\bf i}}$.  Then  
\begin{align*}  
& \|  f\|^2_{\mathfrak  D^{\sigma }_{\alpha, \infty, {\bf i}}} = \alpha |f(\frac{1}{2})|^2 + (1-\sigma)^2 \pi \sum_{n=0}^\infty  \frac{1}{n+1} | a_n|^2  \\
  &  +  \frac{\sigma^2}{2 \alpha^2} \sum_{n,m=0}^\infty (n+1)(m+1) \left(a_{n+1} \bar a_{m+1} - {\bf i }a_{n+1} \bar a_{m+1} {\bf i}     \right) \alpha_{n,m} \\ 
&  +
 \frac{  (1-\sigma) \sigma }{\alpha}   \sum_{n,m=0}^\infty   (m+1) \Re  \left[ \left(    a_{m+1} \bar a_n -  {\bf i }a_{m+1} \bar a_n {\bf i} \right) \beta_{n,m} \right],    
\end{align*}
where 
\begin{align*}
\alpha_{n,m}= & \int_{0}^1 \int_{-\pi}^{  \pi}  
 \frac{r^{n+m+3 -2\alpha } e^{i \theta (n-m)}} {\textrm{exp} ( 2r^{\alpha} \cos  (\alpha \theta))} dr d\theta \\
\beta_{n,m} =  &  \int_{0}^1 \int_{-\pi}^{\pi} \frac{ r^{n+m+ 2-\alpha }  e^{ i\theta ( m +1 -n - \alpha)} }{\textrm{exp} (r^{\alpha} e^{ i \theta \alpha})} dr d\theta
\end{align*} 
\end{cor}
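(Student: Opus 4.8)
The plan is to specialize the formula of the preceding proposition --- which already expresses $\|f\|^2_{\mathfrak D^{\sigma}_{\alpha, k, {\bf i}}}$ in terms of the coefficients $a_n$ and the two double integrals $\alpha_{m,n}$ and $\beta_{m,n}$ --- to the values $k=1$ and $k=\infty$, exactly as Corollary \ref{corK1Infty} specializes Proposition \ref{pro6} in the holomorphic setting. Since the only dependence on $k$ enters through $e_{k-1}$ inside $\alpha_{m,n}$ and $\beta_{m,n}$, the whole argument reduces to evaluating these two integrals after substituting $e_{k-1}(w)=1$ (for $k=1$) and $e_{k-1}(w)=e^{w}$ (for $k=\infty$). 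The quaternionic coefficient identities $a_{n}\bar a_{m}-{\bf i}a_{n}\bar a_{m}{\bf i}=2(a_{1,n}\bar a_{1,m}+a_{2,n}\bar a_{2,m})$ needed throughout are precisely those established in the proof of the preceding proposition.

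First, for $k=1$ I would set $e_{0}\equiv 1$ in both integrals and write $z=re^{i\theta}$. The $r$-integral in $\alpha_{m,n}$ is the elementary $\int_{0}^{\rho}r^{n+m+3-2\alpha}\,dr\to\frac{1}{n+m+4-2\alpha}$ as $\rho\to 1^{-}$, while the $\theta$-integral $\int_{\theta_1}^{\theta_2}e^{i\theta(n-m)}\,d\theta$ has integer frequency $n-m$, so as $\theta_1\to-\pi^{+}$ and $\theta_2\to\pi^{-}$ it converges to $2\pi$ when $n=m$ and to $0$ otherwise. This orthogonality collapses the double sum to its diagonal and, together with $a_{n+1}\bar a_{n+1}-{\bf i}a_{n+1}\bar a_{n+1}{\bf i}=2|a_{n+1}|^{2}$, produces the single sum $\sum_{n}\frac{(n+1)^{2}}{2n+4-2\alpha}|a_{n+1}|^{2}$. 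For $\beta_{m,n}$ the frequency $m+1-n-\alpha$ is non-integer, so the $\theta$-integral does not vanish; evaluating $\int_{-\pi}^{\pi}e^{i\theta(m+1-n-\alpha)}\,d\theta$ directly and combining it with the $r$-integral $\frac{1}{n+m+3-\alpha}$ gives a closed form which, inserted into the outer real-part term and simplified, yields the displayed $\Im\bigl[(e^{i2\pi(m-n+1-\alpha)}-1)(\cdots)\bigr]$ expression.

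Next, for $k=\infty$ I would substitute $e_{\infty}(w)=e^{w}$. Then the modulus in $\alpha_{m,n}$ becomes $|e^{r^{\alpha}e^{i\alpha\theta}}|^{2}=e^{2\Re(r^{\alpha}e^{i\alpha\theta})}=e^{2r^{\alpha}\cos(\alpha\theta)}$ and the denominator in $\beta_{m,n}$ becomes $\exp(r^{\alpha}e^{i\theta\alpha})$, which reproduces the integrands in the statement verbatim. Here no orthogonality is available; instead the point is that these exponential denominators are bounded below away from zero on $[0,1]\times[-\pi,\pi]$, so each integrand is continuous and bounded up to the boundary. This allows the limits $\rho\to 1^{-}$, $\theta_1\to-\pi^{+}$, $\theta_2\to\pi^{-}$ to be taken inside the integral by dominated convergence, replacing them by genuine integrals over $[0,1]\times[-\pi,\pi]$.

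I expect the main obstacle to be the bookkeeping in the $k=1$ evaluation of $\beta_{m,n}$: keeping track of the complex phase produced by the endpoints $\pm\pi$ and converting the outer real part $\Re\bigl[(a_{m+1}\bar a_{n}-{\bf i}a_{m+1}\bar a_{n}{\bf i})\beta_{m,n}\bigr]$ into the imaginary-part form with the correct denominators $(n+m+3-\alpha)(m-n+1-\alpha)$. Everything else is routine: the $k=\infty$ case needs only the elementary identity $|e^{w}|^{2}=e^{2\Re w}$ together with the dominated-convergence interchange, and the collapse of the $\alpha_{m,n}$-sum in the $k=1$ case is immediate from orthogonality of $\{e^{i\theta(n-m)}\}$ on $[-\pi,\pi]$.
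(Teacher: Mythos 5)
Your proposal is correct and follows essentially the same route as the paper: the paper likewise reduces the claim to the splitting identity $\|f\|^2_{\mathfrak D^{\sigma}_{\alpha,k,{\bf i}}}=\|f_1\|^2_{\mathcal D^{\sigma}_{\alpha,k}}+\|f_2\|^2_{\mathcal D^{\sigma}_{\alpha,k}}$, invokes Corollary \ref{corK1Infty} for the $k=1$ and $k=\infty$ integral evaluations (orthogonality of $e^{i\theta(n-m)}$ in the first case, $|e^{w}|^{2}=e^{2\Re w}$ in the second), and recombines via the coefficient identity $a_{n}\bar a_{m}-{\bf i}a_{n}\bar a_{m}{\bf i}=2(a_{1,n}\bar a_{1,m}+a_{2,n}\bar a_{2,m})$, exactly as you describe.
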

\begin{proof}
The first fact follows from the identity $\|f\|^2_{\mathfrak  D^{\sigma }_{\alpha, 1, {\bf i}}} = \|f_1\|^2_{\mathcal D^{\sigma }_{\alpha, 1}} + \|f_2\|^2_{\mathcal D^{\sigma }_{\alpha, 1}}$, Corollary \ref{corK1Infty} and similar computations to the previous theorem such as 
\begin{align*}  
& \|  f \|^2_{\mathfrak  D^{\sigma }_{\alpha, 1, {\bf i}}}  =  \alpha   |f(\frac{1}{2})|^2 +  (1-\sigma)^2 \pi \sum_{n=0}^\infty  \frac{1}{n+1} | a_{n}|^2 +  \frac{\sigma^2 }{\alpha^2 }  \sum_{n=0}^\infty  \frac{(n+1)^2}{ 2n +4-2\alpha} |a_{ n+1}| ^2  \\
&  + \frac{  (1-\sigma) \sigma }{\alpha}   \sum_{n,m=0}^\infty \frac{ 2 (m+1)  \Im  \left[ (e^{i2\pi (m-n+1 -\alpha)} -1) \left( a_{1, m+1} \bar a_{1,n}  +  a_{2, m+1} \bar a_{2,n} \right) \right]}{(n+m+3-\alpha)(m-n+1-\alpha)}.    
\end{align*} 
Beside, the second fact is a consequence of  
$\|  f\|^2_{\mathfrak  D^{\sigma }_{\alpha, \infty, {\bf i}} }  = \|  f_1\|^2_{\mathcal D^{\sigma }_{\alpha, \infty }}  + \|  f_2\|^2_{\mathcal D^{\sigma }_{\alpha, \infty }}  $,  Corollary \ref{corK1Infty} and similar computations. 
\end{proof}

\begin{prop} 
Let $f  \in \mathcal {SR}(\mathbb B) \cap \mathfrak  D^{\sigma }_{\alpha, k, {\bf i}}$ and   ${\bf j} \in \mathbb S^2$. Then 
\begin{align*}   
\|f\|_{\mathfrak  D^{\sigma }_{\alpha, k, {\bf j} } }^2  \leq 8 \|f\|_{\mathfrak  D^{\sigma }_{\alpha, k, {\bf i} } }^2.  
\end{align*}
\end{prop}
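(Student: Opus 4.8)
The plan is to reduce the inequality to a pointwise comparison of the transformed derivative across the two slices and then integrate. Writing $g:=\frac{{d}^{\sigma}}{dq_{\alpha, k}}f$, recall from the remark following its definition that $g=(1-\sigma)f+\sigma\,(\alpha q^{\alpha-1}\ast e_k(q^{\alpha})')^{-\ast}\ast f'$ is obtained from the slice regular functions $f$ and $f'$ by left $\mathbb H$-linear operations and a $\ast$-product with the \emph{intrinsic} slice regular function $q\mapsto(\alpha q^{\alpha-1}\ast e_k(q^{\alpha})')^{-\ast}$, so $g\in\mathcal{SR}(\mathbb E)$. On each slice one has $g\mid_{\mathbb E_{\bf l}}=\frac{{d}^{\sigma}}{dz_{\alpha,k,{\bf l}}}f\mid_{\mathbb E_{\bf l}}$ for ${\bf l}\in\{{\bf i},{\bf j}\}$, hence the integral terms of the two norms are $\int_{\mathbb E_{\bf l}}\|g(z)\|^2 d_z\mu$. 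Moreover $\tfrac12\in\mathbb R$ forces $f(\tfrac12)$ to be a single quaternion independent of the slice, so the point-evaluation term $\alpha\|f(\tfrac12)\|^2$ is the \emph{same} in both norms; only the two integrals must be compared.

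Since $g\in\mathcal{SR}(\mathbb E)$, I would apply the Representation Formula (Theorem \ref{TeoR}) to $g$, with the roles of ${\bf i}$ and ${\bf j}$ interchanged, to obtain $g(x+y{\bf j})=\tfrac12(1-{\bf j}{\bf i})g(x+y{\bf i})+\tfrac12(1+{\bf j}{\bf i})g(x-y{\bf i})$ for every admissible $x+y{\bf j}\in\mathbb E$. Taking quaternionic moduli, using multiplicativity of $|\cdot|$ together with $|1\pm{\bf j}{\bf i}|\le 2$ (as $|{\bf j}{\bf i}|=1$), gives $\|g(x+y{\bf j})\|\le \|g(x+y{\bf i})\|+\|g(x-y{\bf i})\|$, and then the elementary inequality $(a+b)^2\le 2a^2+2b^2$ yields the pointwise bound $\|g(x+y{\bf j})\|^2\le 2\|g(x+y{\bf i})\|^2+2\|g(x-y{\bf i})\|^2$.

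Next I would integrate this estimate over the region $D=\{(x,y):x+y{\bf i}\in\mathbb E\}$, which by axial symmetry of $\mathbb E$ parametrizes $\mathbb E_{\bf j}$ and $\mathbb E_{\bf i}$ identically and is invariant under $y\mapsto -y$. The first term integrates to $2\int_{\mathbb E_{\bf i}}\|g\|^2 d_z\mu$; in the second term the substitution $y\mapsto -y$ leaves both $D$ and the area element fixed, so it likewise integrates to $2\int_{\mathbb E_{\bf i}}\|g\|^2 d_z\mu$. Hence $\int_{\mathbb E_{\bf j}}\|g\|^2 d_z\mu\le 4\int_{\mathbb E_{\bf i}}\|g\|^2 d_z\mu$. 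Combining with the identical point-evaluation terms gives $\|f\|^2_{\mathfrak D^{\sigma}_{\alpha,k,{\bf j}}}=\alpha\|f(\tfrac12)\|^2+\int_{\mathbb E_{\bf j}}\|g\|^2\le 8\big(\alpha\|f(\tfrac12)\|^2+\int_{\mathbb E_{\bf i}}\|g\|^2\big)=8\|f\|^2_{\mathfrak D^{\sigma}_{\alpha,k,{\bf i}}}$, the stated constant $8$ being comfortably non-sharp.

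The main obstacle is the first step rather than the estimates: one must ensure that the fractal-fractional derivative is compatible with the Representation Formula, i.e.\ that $g$ is genuinely slice regular on all of $\mathbb E$. This requires that the $\ast$-inverse $(\alpha q^{\alpha-1}\ast e_k(q^{\alpha})')^{-\ast}$ exist and be intrinsic on $\mathbb E$ (guaranteed by $e_{k-1}(q^{\alpha})\ne 0$ and the choice of branch on $\mathbb B\setminus(-1,0]$), so that $g\mid_{\mathbb E_{\bf l}}$ coincides with the slicewise operator $\frac{{d}^{\sigma}}{dz_{\alpha,k,{\bf l}}}$ appearing in the norms. Once this identification is in place the Representation Formula transfers directly to $g$ and the remaining modulus and integration estimates are routine; equivalently, the same conclusion can be reached through Remark \ref{rem3} by bounding the Splitting components of $f$ in the ${\bf j}$-plane against those in the ${\bf i}$-plane.
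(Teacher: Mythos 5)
Your proof is correct and follows essentially the same route as the paper: apply the Representation Formula to the (slice regular) fractal-fractional derivative, bound $|\tfrac12(1\pm{\bf j}{\bf i})|$ by $1$, use the elementary quadratic inequality, and integrate using the $y\mapsto -y$ symmetry of the slice, with the point-evaluation term at $\tfrac12$ unchanged. Your version is in fact slightly sharper (it yields the constant $4$ on the integral term, whereas the paper uses the non-sharp bound $(a+b)^2\le 4(a^2+b^2)$ to get $8$), and your preliminary justification that the derivative is genuinely slice regular on $\mathbb E$ is a point the paper leaves implicit.
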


\begin{proof}
Representation Formula Theorem allows us to see that 
 \begin{align*}  \frac{ {d}^{\sigma }}{ d q _{\alpha, k, {\bf j} } } f\mid_{\mathbb E_{\bf j}} (w)    =& \frac{1}{2}(1-{\bf j} {\bf i})  \frac{ {d}^{\sigma }}{ d q _{\alpha, k, {\bf i} } }f \mid_{\mathbb E_{\bf i}} (z) 
 +  \frac{1}{2}(1+{\bf j} {\bf i})  \frac{ {d}^{\sigma }}{ d q _{\alpha, k, {\bf i} } } f\mid_{\mathbb E_{\bf i}} (\bar z),
 \end{align*}
 for all $w= x+y{\bf j} \in \mathbb B_{\bf j}$, where $z= x+y{\bf i} \in \mathbb B_{\bf i} $.  Therefore, 
 \begin{align*}  \| \frac{ {d}^{\sigma }}{ d q _{\alpha, k, {\bf j} }  } f 
\mid_{\mathbb E_{\bf j}} (w)  \|^2   \leq &\left(  \|  \frac{ {d}^{\sigma }}{ d q _{\alpha, k, {\bf i} } } f
\mid_{\mathbb E_{\bf i}} (z)\| +  
  \| \frac{ {d}^{\sigma }}{ d q _{\alpha, k, {\bf i} } } f 
\mid_{\mathbb E_{\bf i}}(\bar z)\|  \right)^2 \\
 \leq &4 \left(  \|  \frac{ {d}^{\sigma }}{ d q _{\alpha, k, {\bf i} } } f
\mid_{\mathbb E_{\bf i}} (z)\|^2 +  
  \| \frac{ {d}^{\sigma }}{ d q _{\alpha, k, {\bf i} }} f 
\mid_{\mathbb E_{\bf i}} (\bar z)\|^2  \right), \\
   \int_{\mathbb E_{\bf j}}  \| \frac{ {d}^{\sigma }}{ d q _{\alpha, k, {\bf j} } } f 
\mid_{\mathbb E_{\bf j}} (w)  \|^2  d_w\mu   \leq   &4 \left( \int_{\mathbb E_{\bf i}} 
     \|  \frac{ {d}^{\sigma }}{ d q _{\alpha, k, {\bf i} } } 
\mid_{\mathbb E_{\bf i}}f (z)\|^2 d_z\mu  +  
 \int_{\mathbb E_{\bf i}}  \| \frac{ {d}^{\sigma }}{ d q _{\alpha, k, {\bf i}}} f
\mid_{\mathbb E_{\bf i}} (\bar z)\|^2 d_{\bar z}\mu  \right) \\
  \leq   &8   \int_{\mathbb E_{\bf i}}   \|  \frac{ {d}^{\sigma }}{ d q _{\alpha, k, {\bf i}}} 
\mid_{\mathbb E_{\bf i}}f (z)\|^2 d_z\mu .
\end{align*} 
\end{proof}

\begin{rem} The previous proposition shows that $ \mathcal {SR}(\mathbb B) \cap \mathfrak  D^{\sigma }_{\alpha, k, {\bf i}}$ and 
$\mathcal {SR}(\mathbb B) \cap \mathfrak  D^{\sigma}_{\alpha, k, {\bf j}}$ are the same function sets for any ${\bf i }, {\bf j} \in \mathbb S^2$ but  these are differents as quaternionic  Hilbert right-modules  which  is an usual phenomenon in the theory of slice regular functions
\end{rem}

\begin{cor} Reproducing property.  Given $\sigma\in (0,1)$. Suppose $f\in \mathcal{SR}(\mathbb B)$ such that $\dfrac{ {d}^{\sigma}}{ d q _{\alpha, k,{\bf i}} } f $ belongs to the ${\bf i}-$slice regular Bergman space associated to $\mathbb B_{\bf i}$ for some ${\bf i}\in \mathbb S^2$. Then
\begin{align*}
& f (q)   
 = -  \frac{\sigma }{1-\sigma }  \alpha  q^{ \alpha-1}  \ast (e_{k-1} ( q^{\alpha})    )^{-\ast} \ast   f   '(q)  +  \frac{ 1 }{1-\sigma  }  \int_{\mathbb B_{\bf i}} \mathcal B_{\bf i }(q,\zeta)  \frac{ {d}^{\sigma}}{ d q _{\alpha, k, {\bf i }} } f\mid _{\mathbb B_{\bf i }}(\zeta) d_{\zeta}\mu_{\bf i} ,\\ 
 &  f(q) =  \textrm{exp}[\dfrac{\sigma-1}{\sigma}  ( \  e_k ((\frac{1}{2})^{\alpha}) - e_k (q^{\alpha}) \  ) ] f(\frac{1}{2}) +  \int_{\mathbb B_{\bf i }} \mathcal K_{\frac{1}{2}}(q,\zeta)   \frac{ {d}^{\sigma}}{ d q_{\alpha, k, {\bf i }} } f\mid _{\mathbb B_{\bf i }}(\zeta) d_{\zeta}\mu_{\bf i }, 
\end{align*}
where
$$ \mathcal K_{\frac{1}{2}}(q,\zeta) =  \textrm{exp}[\dfrac{1-\sigma}{\sigma}  e_k (q^{\alpha}) ]  \ast \int_{\gamma_{\frac{1}{2}, q}}  \frac{1}{\sigma}  \textrm{exp}[\dfrac{\sigma-1}{\sigma}  e_k (w^{\alpha}) ]\left(\frac{d}{dw}    e_k (w^{\alpha}) \right)    \mathcal  B_{{\bf i}}(w,\zeta) d_w\mu_{\bf i},$$ 
Here, $\mathcal B_{\bf i }(\cdot, \cdot)$ is the slice regular Bergman kernel associated to $\mathbb B_{\bf i}$ and $\gamma_{\frac{1}{2}, q}$is a rectifiable path contained in $\mathbb B_{I_q} \setminus (-1,0]$ from $\frac{1}{2}$ to $q$.   
\end{cor}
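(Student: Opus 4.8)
The plan is to reproduce, in the slice regular framework, the two-step argument used for Proposition~\ref{CReprodProp}, replacing the complex Bergman kernel $B$ by the ${\bf i}$-slice regular Bergman kernel $\mathcal B_{\bf i}$, Proposition~\ref{pro1} by its $\ast$-product analogue Proposition~\ref{SRpro1}, and ordinary products by $\ast$-products. For the first identity I would start from the $\beta=1$ form of the operator,
\begin{align*}
\frac{ {d}^{\sigma}}{ d q _{\alpha, k, {\bf i}}} f(q) = (1-\sigma)\, f(q) + \sigma\, (e_k(q^{\alpha})')^{-\ast} \ast f'(q),
\end{align*}
recalling that $e_k(q^{\alpha})' = \alpha q^{\alpha-1} \ast e_{k-1}(q^{\alpha})$ is intrinsic slice regular, so its $\ast$-inverse is well defined (this is where the hypothesis $e_{k-1}(q^{\alpha})\neq 0$ enters) and reduces to the pointwise reciprocal on each slice. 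Since, by assumption, $\frac{ {d}^{\sigma}}{ d q _{\alpha, k, {\bf i}}} f$ lies in the ${\bf i}$-slice regular Bergman space, the reproducing property of $\mathcal B_{\bf i}$ gives $\frac{ {d}^{\sigma}}{ d q _{\alpha, k, {\bf i}}} f(q) = \int_{\mathbb B_{\bf i}} \mathcal B_{\bf i}(q,\zeta)\,\frac{ {d}^{\sigma}}{ d q _{\alpha, k, {\bf i}}} f\mid_{\mathbb B_{\bf i}}(\zeta)\, d_{\zeta}\mu_{\bf i}$. Substituting this and solving for $f(q)$ yields the first formula, the coefficient $-\frac{\sigma}{1-\sigma}(e_k(q^{\alpha})')^{-\ast}$ of $f'(q)$ being exactly the stated $\ast$-product term.

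For the second identity I would invoke Proposition~\ref{SRpro1},
\begin{align*}
\left(\textrm{exp}[\tfrac{\sigma-1}{\sigma} e_k(q^{\alpha})] \ast f(q)\right)' = \frac{1}{\sigma}\, \textrm{exp}[\tfrac{\sigma-1}{\sigma} e_k(q^{\alpha})] \ast (e_k(q^{\alpha})')^{-\ast} \ast \frac{ {d}^{\sigma}}{ d q _{\alpha, k, {\bf i}}} f(q),
\end{align*}
then replace $\frac{ {d}^{\sigma}}{ d q _{\alpha, k, {\bf i}}} f$ by its Bergman reproduction. Integrating both sides along a rectifiable path $\gamma_{\frac{1}{2},q}$ from $\frac{1}{2}$ to $q$ inside $\mathbb B_{I_q}\setminus(-1,0]$, and exchanging the path integral with the area integral over $\mathbb B_{\bf i}$ by Fubini, produces
\begin{align*}
\textrm{exp}[\tfrac{\sigma-1}{\sigma} e_k(q^{\alpha})] \ast f(q) - \textrm{exp}[\tfrac{\sigma-1}{\sigma} e_k((\tfrac{1}{2})^{\alpha})]\, f(\tfrac{1}{2}) = \int_{\mathbb B_{\bf i}} \Big( \int_{\gamma_{\frac{1}{2},q}} \cdots \Big) \frac{ {d}^{\sigma}}{ d q _{\alpha, k, {\bf i}}} f\mid_{\mathbb B_{\bf i}}(\zeta)\, d_{\zeta}\mu_{\bf i}.
\end{align*}
Finally, since $q\mapsto \textrm{exp}[\tfrac{\sigma-1}{\sigma} e_k(q^{\alpha})]$ is intrinsic slice regular, its $\ast$-inverse is $\textrm{exp}[\tfrac{1-\sigma}{\sigma} e_k(q^{\alpha})]$; $\ast$-multiplying from the left by this factor isolates $f(q)$ and collects the inner path integral into the kernel $\mathcal K_{\frac{1}{2}}(q,\zeta)$, giving the second identity.

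Validity for all $q\in\mathbb B$ rather than merely $q\in\mathbb B_{\bf i}$ follows because every function entering the formulas is slice regular or intrinsic, so once the identities hold on the slice $\mathbb B_{\bf i}$ they extend by the Representation Formula (Theorem~\ref{TeoR}); equivalently, one may split $f\mid_{\mathbb B_{\bf i}} = f_1 + f_2{\bf j}$ via Lemma~\ref{LemS}, apply Proposition~\ref{CReprodProp} to each component as in Remark~\ref{rem3}, and recombine through $P_{\bf i}$. The main obstacle I expect is the bookkeeping forced by the non-commutative $\ast$-product: one must keep the intrinsic factors $\alpha q^{\alpha-1}$, $e_{k-1}(q^{\alpha})$ and the exponential weight on the correct side so that their $\ast$-inverses combine as in the complex scalar computation, and one must check that both the reproducing identity of $\mathcal B_{\bf i}$ and the Fubini exchange survive under $\ast$-multiplication. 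Because the exponential weight is intrinsic, these $\ast$-operations collapse to ordinary multiplication on each slice, which is precisely what allows the computation of Proposition~\ref{CReprodProp} to transfer.
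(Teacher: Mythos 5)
Your proposal is correct and follows essentially the same route as the paper, whose proof consists precisely of invoking the reproducing property of the slice regular Bergman kernel together with Proposition \ref{SRpro1}; you simply spell out the details by transferring the two-step computation of Proposition \ref{CReprodProp} to the slice setting via the $\ast$-product and the intrinsicness of the exponential weight. No discrepancy to report.
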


\begin{proof} Use the  reproducing property of the slice regular Bergman kernel,  see \cite{CGSBERgman, CGLSS},   and  Proposition \ref{SRpro1}.
\end{proof}
 
\begin{rem} Similarly to the complex case commented in Remark \ref{RemDirchlet-Bergman} choosing $\sigma=\alpha^2$ then the family of slice regular function spaces $(\mathfrak D^{\alpha^2,1}_{\alpha, k, {\bf i }})_{\alpha \in (0,1]}$ it is possible consider $\alpha =0$.  In addition, for $k=1$ we have that the well-known ${\bf i}$-slice regular function spaces of Dirichlet and Bergman are given $\mathfrak D^{0,1}_{0, 1, {\bf i}} $ and $\mathfrak  D^{1,1}_{1, 1, {\bf i }} $, respectively, i.e.,  these  slice regular function spaces are elements of the family $(\mathfrak  D^{\alpha^2,1}_{\alpha, 1, {\bf i}})_{\alpha \in [0,1]}$.
\end{rem}

\begin{rem}
There are many questions that can be resolved in future papers on the  quaternionic Hilbert right-module $\mathcal {SR}(\mathbb B) \cap \mathfrak  D^{\sigma }_{\alpha, k, {\bf i}}$ that arise from the generalization of the slice regular function  space  of Dirichlet and Bergman and the properties of these two  quaternionic right-modules.
\end{rem}

\section*{Declarations}
\subsection*{Funding} Research partially supported by Instituto Polit\'ecnico Nacional (grant numbers SIP20241638, SIP20241237) and CONAHCYT.
\subsection*{Competing Interests} The authors declare that they have no competing interests regarding the publication of this paper.
\subsection*{Author contributions} All authors contributed equally to the study, read and approved the final version of the submitted manuscript.
\subsection*{Availability of data and material} Not applicable
\subsection*{Code availability} Not applicable
\subsection*{ORCID}
\noindent
Jos\'e Oscar Gonz\'alez-Cervantes: https://orcid.org/0000-0003-4835-5436\\
Carlos Alejandro Moreno-Mu\~noz: https://orcid.org/0009-0001-4040-0290\\
Juan Bory-Reyes: https://orcid.org/0000-0002-7004-1794

\end{document}